\theoremstyle{plain}
\newtheorem*{thmA}{Theorem A}
\newtheorem*{thmB}{Theorem B}
\newtheorem{thm}{Theorem}[section]
\newtheorem{lem}[thm]{Lemma}
\newtheorem{prop}[thm]{Proposition}
\newtheorem{cor}[thm]{Corollary}
\theoremstyle{definition}
\newtheorem{rem}[thm]{Remark}
\theoremstyle{remark}
\newcommand{\LS}{\ensuremath{\underset{N=1}{\overset{\infty}{\cap}} \, {\underset{i=N}{\overset{\infty}{\cup}}}\,}}
\author[J.\ Chaika]{Jon Chaika}\email{chaika@math.utah.edu}\address{Department of Mathematics, University of Utah, 155 S 1400 E, Room 233, Salt Lake City, UT~84112, USA}
\author[D. Constantine]{David Constantine}\email{dconstantine@wesleyan.edu}\address{Department of Mathematics and Computer Science, Wesleyan University, 265 Church Street, Middletown, CT~06459, USA}
\subjclass{Primary: 37E10, 37A05, 37B10}
\keywords{Shrinking target, Sturmian sequence, circle rotation, symbolic dynamics, continued fractions}
\thanks{The first author is supported by NSF grants DMS-1004372, 135500, 1452762, the Sloan Foundation, a Warnock chair, and a Poincar\'e chair.}
\begin{document}

\title[Quantitative shrinking targets]{A Quantitative Shrinking Target result on Sturmian sequences for rotations}

\maketitle






\begin{abstract}
Let $R_\alpha$ be an irrational rotation of the circle, and code the orbit of any point $x$ by whether $R_\alpha^i(x)$ belongs to $[0,\alpha)$ or $[\alpha, 1)$ -- this produces a Sturmian sequence. A point is undetermined at step $j$ if its coding up to time $j$ does not determine its coding at time $j+1$. We prove a pair of results on the asymptotic frequency of a point being undetermined, for full measure sets of $\alpha$ and $x$.
\end{abstract}

%

\section{Introduction}
 
 \subsection{Statement of the problem and main results}

In this paper we study a shrinking target problem. Let $\alpha \in [0,1)$, let $R_\alpha:[0,1)\to [0,1)$ be the rotation $R_\alpha(x) = x+\alpha \mbox{ (mod 1)}$, and let $\lambda$ denote Lebesgue measure. The following theorem, due to Weyl, is well known.

\begin{thm}
Let $\alpha \notin \mathbb{Q}$. Then for any $x,y \in [0,1)$, and any $\epsilon>0$,
\[ \lim_{N\to \infty} \frac{\sum_{i=1}^N \chi_{B(y,\epsilon)}(R^i_\alpha x)}{\sum_{i=1}^N \lambda(B(y,\epsilon))} = 1. \]
\end{thm}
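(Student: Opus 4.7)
The plan is to prove Weyl's equidistribution theorem in two stages: first establish the conclusion for continuous periodic test functions via Fourier analysis, and then approximate the indicator $\chi_{B(y,\epsilon)}$ from above and below by continuous functions.

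For the first stage, the denominator is simply $N \cdot \lambda(B(y,\epsilon))$, so the statement is equivalent to the assertion that
\[
\frac{1}{N} \sum_{i=1}^N f(R_\alpha^i x) \longrightarrow \int_0^1 f \, d\lambda
\]
for $f = \chi_{B(y,\epsilon)}$. By linearity and Stone--Weierstrass (on the circle), it suffices to check this for the characters $f_k(x) = e^{2\pi i k x}$, $k \in \mathbb{Z}$. The case $k=0$ is immediate. For $k \neq 0$, irrationality of $\alpha$ gives $e^{2\pi i k \alpha} \neq 1$, so summing the geometric series yields
\[
\left|\frac{1}{N}\sum_{i=1}^N e^{2\pi i k (x + i\alpha)}\right| \;\le\; \frac{1}{N} \cdot \frac{2}{|1 - e^{2\pi i k \alpha}|} \;\longrightarrow\; 0,
\]
matching $\int_0^1 f_k \, d\lambda = 0$. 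By linearity this extends to all trigonometric polynomials, and then a uniform approximation argument extends it to all continuous $1$-periodic functions.

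For the second stage, fix $\delta>0$ and choose continuous $1$-periodic functions $g^-, g^+$ with $g^- \le \chi_{B(y,\epsilon)} \le g^+$ and $\int_0^1 (g^+ - g^-) \, d\lambda < \delta$ (e.g., piecewise linear bumps). Applying the continuous case to $g^\pm$ sandwiches the Birkhoff averages of $\chi_{B(y,\epsilon)}$:
\[
\lambda(B(y,\epsilon)) - \delta \;\le\; \liminf_{N\to\infty} \frac{1}{N}\sum_{i=1}^N \chi_{B(y,\epsilon)}(R_\alpha^i x) \;\le\; \limsup_{N\to\infty} \frac{1}{N}\sum_{i=1}^N \chi_{B(y,\epsilon)}(R_\alpha^i x) \;\le\; \lambda(B(y,\epsilon)) + \delta.
\]
Letting $\delta \to 0$ and dividing by $\lambda(B(y,\epsilon)) > 0$ gives the claim.

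The only mild obstacle is passing from continuous test functions to the indicator of an interval, since equidistribution fails in general for non-Riemann-integrable characteristic functions; here the boundary of $B(y,\epsilon)$ consists of two points and has measure zero, which is exactly what makes the sandwich work. Note also that the uniformity in $x$ is automatic: the trigonometric estimate above is uniform in $x$, and so is the sandwich bound, so no separate argument is needed for the ``any $x$'' statement.
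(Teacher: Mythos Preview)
Your argument is the classical Weyl proof and is correct. The only stylistic wrinkle is the sentence ``By linearity and Stone--Weierstrass, it suffices to check this for the characters'': as written this might suggest Stone--Weierstrass alone reduces the \emph{indicator} case to characters, which of course it does not, since $\chi_{B(y,\epsilon)}$ is not continuous. You clearly understand this, since your second stage is precisely the sandwich argument needed to pass from continuous test functions to interval indicators; just be careful that the reduction to characters is a reduction of the \emph{continuous} case, not of the full statement.

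As for comparison with the paper: the paper does not prove this theorem at all. It is quoted in the introduction as a ``well known'' result due to Weyl, serving only as motivation for the shrinking target problems studied in the body of the paper. So there is no paper proof to compare against; your proposal supplies the standard argument that the authors omit.
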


\noindent That is, the asymptotic for the number of visits of the orbit of $x$ to the target set $B(y,\epsilon)$ by step $N$ is given by the sum of the size of the target over those $N$ steps.

The statement is written here in a slightly unusual way -- the denominator is clearly $N2\epsilon$ (assuming $\epsilon\leq\frac{1}{2}$ and identifying $[0,1)$ with $S^1$). But it suggests the following sort of problem. Let $\{B_i\}$ be a sequence of measurable sets in $[0,1)$. What can be said about the behavior of $\sum_{i=1}^N\chi_{B_i}(R_\alpha^i x)$; in particular, is it asymptotic to $\sum_{i=1}^N\lambda(B_i)$?

This is, of course, an enormously varied problem. Cases which have generated significant interest are \emph{shrinking target problems}, in which the $B_i$ form a decreasing, nested chain. By the Borel-Cantelli Lemma, the cases of real interest are when $\sum_{i=1}^\infty \lambda(B_i) = \infty.$ Several results on this problem for rotations and for interval exchange transformations are contained in \cite{chaika_constantine}, including the following.

\begin{thm}\cite{chaika_constantine}
For all $\alpha$ satisfying an explicit, full measure diophantine condition, and for any sequence $\{r_i\}$ such that $ir_i$ is non-increasing and $\sum_{i=1}^\infty r_i=\infty$, and any $y$
\[ \lim_{N\to \infty} \frac{\sum_{i=1}^N \chi_{B(y,r_i)}(R^i_\alpha x)}{\sum_{i=1}^N 2r_i} =1 \]
for almost every $x$.
\end{thm}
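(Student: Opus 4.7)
The plan is a second-moment / Borel--Cantelli argument along a dyadic subsequence of times, with variance control powered by the Diophantine condition on $\alpha$. Write $S_N(x) = \sum_{i=1}^N \chi_{B(y,r_i)}(R_\alpha^i x)$, so $\int S_N \, d\lambda = \sum_{i=1}^N 2r_i =: E_N$, and set $N_k = 2^k$. The hypothesis that $i r_i$ is non-increasing implies that for $i$ in a dyadic block $[N_k, N_{k+1}]$ the radii $r_i$ lie in $[r_{N_{k+1}}, r_{N_k}]$ with $r_{N_{k+1}} \geq r_{N_k}/2$, so each $r_i$ is comparable to $r_{N_k}$ up to a factor of two. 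In particular $E_{N_{k+1}} - E_{N_k} \asymp N_k r_{N_k}$, and $E_{N_k} \to \infty$ follows from $\sum r_i = \infty$ via Cauchy condensation.

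Next I would compute the variance of the block-increment $T_k = S_{N_{k+1}} - S_{N_k}$. By $R_\alpha$-invariance of $\lambda$,
\[ \int T_k^2 \, d\lambda - \Bigl(\int T_k \, d\lambda\Bigr)^2 = 2\!\!\sum_{N_k < i < j \leq N_{k+1}}\!\!\bigl[\lambda(B(y,r_i) \cap R_\alpha^{i-j} B(y,r_j)) - 4 r_i r_j\bigr] + O(\mathbb{E} T_k). \]
Each intersection is bounded by $2\min(r_i, r_j) \asymp r_{N_k}$ when $\|(j-i)\alpha\| \leq r_i + r_j$ and vanishes otherwise, where $\|\cdot\|$ denotes distance to the nearest integer. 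The key analytic task therefore reduces to bounding $\#\{n \in (0, N_k] : \|n\alpha\| \leq 4 r_{N_k}\}$ by $O(N_k r_{N_k} + 1)$, i.e., by the generic prediction. The three-distance theorem delivers this bound provided $r_{N_k} \gtrsim 1/q_n$ for the nearest continued-fraction denominator of $\alpha$; the explicit, full-measure Diophantine condition on $\alpha$ is chosen precisely so that such a count is valid uniformly in the scale $r_{N_k}$. Summing yields $\mathrm{Var}(T_k) = O(N_k r_{N_k}) = O(\mathbb{E} T_k)$, which together with disjointness of the blocks also controls $\mathrm{Var}(S_{N_k})$.

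Finally, I would apply Chebyshev's inequality and the first Borel--Cantelli lemma --- possibly after grouping consecutive dyadic blocks until their total expected count exceeds a threshold such as $k^{1+\epsilon}$, to handle the borderline case in which $N_k r_{N_k}$ fails to tend to infinity --- to get $T_k / \mathbb{E} T_k \to 1$ almost surely along the grouped sequence. Telescoping yields $S_{N_k}/E_{N_k} \to 1$ for a.e.\ $x$, and monotonicity of $N \mapsto S_N$ combined with $E_{N_{k+1}}/E_{N_k} = O(1)$ (itself a consequence of $i r_i$ being non-increasing) promotes this to convergence along all $N$. The main obstacle is the uniform Diophantine counting estimate at scale $r_{N_k}$: the three-distance theorem is not sharp below the scale $1/q_n$, so one must extract from the Diophantine hypothesis on $\alpha$ --- likely a polynomial control on $q_{n+1}/q_n$ --- a genuinely quantitative bound on close returns of $\{n\alpha\}$ to $0$ at arbitrary small scales. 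Identifying the precise condition and verifying that it has full Lebesgue measure is where the bulk of the technical work sits.
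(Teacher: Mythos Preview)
The theorem you are trying to prove is not proved in this paper at all: it is quoted from the companion paper \cite{chaika_constantine} as motivation in the introduction, and no argument for it appears here. There is therefore nothing in the present paper to compare your proposal against.

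That said, your outline is broadly the standard second-moment route to quantitative Borel--Cantelli results for rotations, and it is in the right spirit. A few cautions if you intend to flesh it out. First, the step ``$\mathrm{Var}(T_k)=O(\mathbb{E}T_k)$ implies $\sum \mathrm{Var}(T_k)/(\mathbb{E}T_k)^2<\infty$'' is exactly where the argument bites, and you already flag this; the regrouping you propose is the usual fix, but it has to be done carefully so that the regrouped expectations both diverge and satisfy a ratio bound allowing interpolation. Second, the counting estimate $\#\{n\le N:\|n\alpha\|\le r\}=O(Nr+1)$ is \emph{not} automatic from the three-distance theorem alone when $r$ is much smaller than $1/q_m$ for the relevant $q_m$; one genuinely needs the Diophantine input (e.g.\ control of $q_{m+1}/q_m$) here, and the precise form of that input is the content of the cited paper rather than something one can wave through. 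Third, your claim $E_{N_{k+1}}/E_{N_k}=O(1)$ does follow from $ir_i$ non-increasing, but you should state the one-line reason ($E_{N_{k+1}}-E_{N_k}\le 2N_k r_{N_k}\le 2E_{N_k}$, since $ir_i\le N_k r_{N_k}$ forces $r_i\ge r_{N_k}$ is wrong---rather $r_i\le 2r_{2i}$ etc.); getting this inequality the right way round matters for the interpolation step.
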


In this paper we consider another shrinking target problem for rotations, but one whose targets arise in a very different way.  Rather than being subject to some pre-determined analytic constraint (as for the sequence $\{r_i\}$) the targets arise from the dynamics of the rotation itself.

Let $\alpha$ be given. Let $\mathcal{P}=\{A_0, A_1\}$ be the partition of $[0,1)$ given by $A_0=[0,\alpha), A_1=[\alpha, 1)$.  The bi-infinite sequences $(c_i(x))_{i\in\mathbb{Z}}$ defined by $c_i(x)=j$ if $R_\alpha^ix \in A_j$ are known as Sturmian sequences (see, e.g. \cite{Fogg}, Ch. 6). These are sequences of minimal complexity, or with minimal block growth. They were introduced by Hedlund and Morse \cite{HM}, and have been studied extensively.

 For a sequence $(c_0, c_1, \ldots)$ (finite or infinite) of 0's and 1's, let $C_{c_0,c_1, \ldots}=\{x: R_\alpha^ix\in A_{c_i} \mbox{ for all } i\}$. If $x\in C_{c_0, c_1 \ldots}$, then $(c_0, c_1, \ldots)$ is a \emph{coding} for the orbit of $x$ (or a portion thereof, if the sequence is finite). Let $\Sigma$ be the set of finite codings $c_0, c_1, \ldots, c_n$ which actually occur, i.e. for which $C_{c_0, \ldots, c_n}\neq \emptyset$.  Let 
\[V_j = \{x : x\in C_{c_0, \ldots, c_j} \mbox{ and such that } c_0, \ldots, c_j, 0 \mbox{ and } c_0, \ldots, c_j, 1 \in \Sigma\}.\] 
This is the set of \emph{`undetermined' points at step} $j$, that is, points whose coding up to step $j$ does not determine the coding at step $j+1$.  The word $c_0,\ldots,c_j$ is also known as a \emph{right special word} (see, e.g., \cite[\S2.1.1]{lothaire}.)

We want to find asymptotics on how often a point is undetermined; specifically, we will prove

\begin{thmA}\label{undetermined thm}
For almost all $\alpha$,
\[ \lim _{n\to \infty} \frac{\log \sum_{j=1}^{n} \chi_{V_j}(x)}{\log \sum_{j=1}^n \lambda(V_j)} = 1 \]
for almost all $x\in (0,1)$.
\end{thmA}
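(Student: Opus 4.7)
I would first give an explicit geometric description of each $V_j$. Since Sturmian sequences have complexity $n+1$, there is a unique right special factor of each length, so $V_j$ is a single cylinder of length $j+1$. Writing $\mathcal{P}_j := \bigvee_{i=0}^{j} R_\alpha^{-i}\mathcal{P}$, the only new partition point added in passing from $\mathcal{P}_j$ to $\mathcal{P}_{j+1}$ is $-(j+1)\alpha \pmod 1$, so $V_j$ is the atom of $\mathcal{P}_j$ containing this point, equivalently the unique atom whose image under $R_\alpha^{j+1}$ crosses $0$. The three-distance theorem then expresses $\lambda(V_j)$ in terms of $\beta_m := \|q_m\alpha\|$ for the $m$ with $q_m \leq j+2 < q_{m+1}$, where $p_m/q_m$ are the continued fraction convergents of $\alpha$. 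A Gauss-measure Borel-Cantelli argument on the partial quotients controls $M_n := \sum_{j=1}^n \lambda(V_j)$ for a.e.\ $\alpha$; typically one obtains $M_n$ of order $\log n$ up to subpolynomial factors.

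For the lower bound $S_n(x) := \sum_{j=1}^n \chi_{V_j}(x) \geq M_n^{1-o(1)}$ almost surely, I would apply a Gal--Koksma-style second-moment Borel-Cantelli argument to the indicator functions $\chi_{V_j}$. The key input is an overlap bound
\[
\sum_{j, k \leq n} \lambda(V_j \cap V_k) \leq C\, M_n^2 (\log n)^{O(1)}.
\]
This follows from a \emph{laminar} property of the family: whenever $V_j \cap V_k \neq \emptyset$, one of the two is contained in the other. Indeed, each $V_\ell$ is the atom of a refining dynamical partition corresponding to the unique right special coordinate, and there is a dynamical nesting $V_j \subset R_\alpha^{-1}(V_{j-1})$ coming from the fact that the length-$j$ right special factor is a suffix of the length-$(j+1)$ one. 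Classifying the resulting nested chains by continued fraction scale yields the overlap estimate, hence the almost-sure lower bound.

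The upper bound $S_n(x) \leq M_n^{1+o(1)}$ is more delicate, since the trivial $S_n \leq n$ is much larger than the target when $M_n$ grows only logarithmically. Iterating the nesting gives $V_j \subset R_\alpha^{-j}(V_0) \cap R_\alpha^{-(j-1)}(V_1) \cap \cdots \cap R_\alpha^{-1}(V_{j-1})$, so that $x \in V_j$ imposes a tight symbolic constraint on the whole orbit segment $x, R_\alpha(x), \ldots, R_\alpha^j(x)$. Combining this with the laminar structure, I would show that at each continued fraction scale $m$ (the range of $j$ with $q_m \leq j+2 < q_{m+1}$), a typical $x$ lies in only $O(1)$ of the $V_j$'s, via an ergodic averaging argument which is uniform across scales. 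Summing over the roughly $\log n$ relevant scales for a.e.\ $\alpha$ gives the upper bound.

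I expect the main obstacle to be the upper bound: the $V_j$'s are tightly correlated via the rotation, so the standard Borel-Cantelli toolkit does not apply directly, and one must exploit the laminar structure and the renormalization structure of the continued fractions in tandem to extract the per-scale count. Verifying that the Diophantine conditions needed throughout hold on a full-measure set of $\alpha$ is a further technical step, carried out by a standard Borel-Cantelli argument applied to the Gauss map.
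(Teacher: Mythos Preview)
Your overall setup---the geometric description of $V_j$, the continued-fraction scale decomposition, the Gauss-map Borel--Cantelli control of partial quotients, and the estimate $M_n\asymp\log n$ up to subpolynomial factors---matches the paper. But you have the difficulty of the two halves reversed.

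In the paper the upper bound on $S_n(x)$ is the \emph{easy} part, and it is deterministic in $x$. The key structural fact you do not identify is that if one partitions each range $[q_i,q_{i+1})$ into sub-blocks $J^i_b$ of length $q_{i-1}$ or $q_i$, then the sets $V_l$ for $l$ ranging over a single $J^i_b$ are pairwise \emph{disjoint} (Lemma~\ref{disjoint}). Hence $x$ lies in at most one $V_l$ per sub-block, and there are exactly $a_{i+1}$ sub-blocks in $[q_i,q_{i+1})$; summing gives $\sum_{j<q_m}\chi_{V_j}(x)\le \sum_{i=1}^m a_i$ for every $x$. One then only needs $\sum_{i\le m}a_i\le C\,m(\log m)^3$ for a.e.\ $\alpha$, which is your Gauss-map step. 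So no ``ergodic averaging which is uniform across scales'' is needed, and your claim that a typical $x$ hits only $O(1)$ of the $V_j$'s per scale is not what is proved---it can be as many as $a_{m+1}$, but that is already good enough.

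The lower bound is where the probabilistic work actually lies. The paper does not run a Gal--Koksma argument over all $j$; instead it selects one sub-block $J^i_2$ per scale, sets $h_i(x)=\sum_{j\in J^i_2}\chi_{V_j}(x)\in\{0,1\}$, shows $\int h_i>\tfrac12$, and proves an exponential-in-$|i-j|$ decorrelation $\bigl|\int h_ih_j-\int h_i\int h_j\bigr|\le Ce^{-b|i-j|}$ via equidistribution of the $q_j$ rotated copies at the finer scale (Proposition~\ref{decay2}). Chebyshev along squares then gives $\sum_{i\le m-2}h_i(x)>\tfrac14(m-2)$ for a.e.\ $x$. Your laminar observation (that $V_k\subset V_j$ whenever $k>j$ and they meet, since $\mathcal P_k$ refines $\mathcal P_j$) is correct, and a Gal--Koksma-type overlap bound could probably be pushed through, but the paper's selection of a single $\{0,1\}$-valued indicator per scale is both simpler and sharper, since it converts the problem into a near-i.i.d.\ sum rather than a heavily correlated one.
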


As in \cite{chaika_constantine}, the full measure condition on $\alpha$ is a diophantine condition involving the continued fraction expansion of $\alpha$. It will be stated explicitly in the proof.

To understand why Theorem A constitutes a shrinking target problem, consider the following.  Let $\mathcal{P}_j = \vee_{k=0}^j R_\alpha^k\mathcal{P}$, the partition generated by $\mathcal{P}$ and its first $j$ translates.  For $x\in X$, denote by $[[x]]_j$ the atom of $x$ in $\mathcal{P}_j$.  The coding $c_0, \ldots, c_j$ determines only the atom $[[R_\alpha^jx]]_j$.  A point $x$ will belong to $V_j$ if and only if $R_\alpha^jx$ is in $[[1-\alpha]]_j$ as the image of this atom under one more rotation contains points in both $A_0$ and $A_1$.
We will denote $[[1-\alpha]]_j$ by $U_j$ -- these are the shrinking targets which we are trying to hit.  Note that $U_j = R_\alpha^j(V_j)$.

The logarithms in Theorem A indicate a weaker asymptotic result than in \cite{chaika_constantine}. The stronger version is not true:

\begin{thmB}\label{no stronger}
For almost all $\alpha$,
\[ \lim_{N\to \infty} \frac{\sum_{j=1}^N \chi_{V_j}(x)}{\sum_{j=1}^N \lambda(V_j)}\]
does not exist for almost every $x\in[0,1)$.
\end{thmB}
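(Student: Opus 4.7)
The plan is to find, for almost every $\alpha$ and almost every $x$, two subsequences of times along which the ratio in question tends to different limits; continued-fraction resonances of $\alpha$ produce bursts of hits in the $V_j$'s that are not smoothed out by the denominator.

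First I would describe the geometry of $V_j$.  By the three-distance theorem applied to the backward orbit $\{-k\alpha \bmod 1\}_{-1\le k\le j}$, each $V_j$ is an arc around $-(j+1)\alpha$ whose length, for $q_k \le j+1 < q_{k+1}$, is one of at most three values of order $\|q_{k-1}\alpha\| \asymp 1/q_k$ or $\|q_k\alpha\| \asymp 1/q_{k+1}$.  Block-by-block summation gives $\sum_{j \in [q_k, q_{k+1})} \lambda(V_j) = O(1)$, so $L(N) := \sum_{j=1}^N \lambda(V_j) \asymp \ell$ for $N = q_\ell$.  Next, within a block $[q_k, q_{k+1})$ with $a_{k+1}$ large, I would exploit that $V_{j+q_k}$ is the translate of $V_j$ by $\pm \|q_k\alpha\|$: when $V_j$ is a ``long'' atom of length $\|q_{k-1}\alpha\|$, the sequence $V_j, V_{j+q_k}, \ldots, V_{j+(a_{k+1}-1)q_k}$ shares a common sub-arc of length $\asymp \|q_k\alpha\|$.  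Aggregating over the admissible starting $j$'s, one obtains a \emph{burst region} $H_k$ of measure $\asymp 1/a_k$ such that every $x \in H_k$ hits $\asymp a_{k+1}$ of the $V_i$'s in the block.

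For a.e.\ $\alpha$, Birkhoff applied to $a \mapsto 1/a$ under the Gauss map gives $\sum_k 1/a_k = \infty$, and a dynamical Borel--Cantelli argument (in the spirit of \cite{chaika_constantine}) yields, for a.e.\ $x$, that $x \in H_k$ for infinitely many $k$.  By Khintchine, a.e.\ $\alpha$ admits a subsequence $k_i$ along which $a_{k_i+1}/k_i \to \infty$, and the same Borel--Cantelli, restricted to this subsequence, produces a.e.\ $x$ with $x \in H_{k_i}$ for infinitely many $i$.  Along $N_i := q_{k_i+1}$ one then has $\sum_{j=1}^{N_i}\chi_{V_j}(x) \geq c\, a_{k_i+1}$ while $L(N_i) \asymp k_i$, whence the ratio is $\geq c\, a_{k_i+1}/k_i \to \infty$.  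Along $N'_i := q_{k_i}$, immediately before the bursts, crude upper estimates (together with Theorem~A) keep the ratio bounded, so the limit cannot exist.

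\textbf{Main obstacle.}  The crux lies in (i) the geometric lemma producing the common sub-arc within a block, which hinges on the arithmetic identity $\|q_{k-1}\alpha\| = a_{k+1}\|q_k\alpha\| + \|q_{k+1}\alpha\|$ and on careful bookkeeping of which atom type each $V_j$ belongs to; and (ii) the dependent Borel--Cantelli for the events $\{x \in H_k\}$, which are correlated across continued-fraction scales and require a quantitative second-moment or decorrelation estimate of the sort used in proving Theorem~A.
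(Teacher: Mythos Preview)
Your proposal contains a decisive error in the denominator estimate.  You claim that block-by-block summation gives $\sum_{j\in[q_k,q_{k+1})}\lambda(V_j)=O(1)$ and hence $L(q_\ell)\asymp\ell$.  This is false.  Using Lemma~\ref{atom_description}, for $j\in J^k_b$ with $b\ge 2$ one has $\lambda(V_j)=\langle\langle q_{k-1}\alpha\rangle\rangle-(b-2)\langle\langle q_k\alpha\rangle\rangle$ and $|J^k_b|=q_k$, so
\[
\sum_{j\in J^k_b}\lambda(V_j)=q_k\bigl[\langle\langle q_{k-1}\alpha\rangle\rangle-(b-2)\langle\langle q_k\alpha\rangle\rangle\bigr],
\]
which for small $b$ is of order $1$, not $o(1)$.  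Summing over $b=2,\dots,a_{k+1}$ gives a quantity of order $a_{k+1}/2$; hence $\sum_{j\in[q_k,q_{k+1})}\lambda(V_j)\asymp a_{k+1}$ and $L(q_\ell)\asymp\sum_{i\le\ell}a_i$ (cf.\ Corollary~\ref{cor:lambda bound} for the upper bound).  In particular, along your subsequence with $a_{k_i+1}/k_i\to\infty$, the denominator $L(q_{k_i+1})$ is at least of order $a_{k_i+1}$, the same as your burst of hits, so the ratio does \emph{not} tend to infinity.  The entire strategy of exhibiting a subsequence along which the ratio blows up therefore collapses; indeed, the middle inequality in \eqref{pointwise_bounds} shows the ratio is always at most~$1$ along the times $q_m-1$.

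The paper's argument is structurally different.  It does not try to make the ratio diverge; instead it shows that for infinitely many $m$ (those with $a_m>C\sum_{i<m}a_i$, which is a much more violent condition than $a_m/m\to\infty$) there are two sets $X_m,Y_m\subset[0,1)$, each of measure bounded below by an absolute constant, on which the values of $f_m$ differ by at least a fixed $D>0$.  The conclusion is then reached by contradiction: assuming the limit $f$ exists on a positive-measure set, Luzin's theorem gives a large set on which $f$ is uniformly continuous, and an equidistribution lemma produces nearby points $x^*\in X_m$, $y^*\in Y_m$ in that set, forcing $|f_m(x^*)-f_m(y^*)|$ to be simultaneously $\ge D$ and $<D$.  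No Borel--Cantelli step on the $x$-variable is needed.  If you want to salvage your outline, you would need to replace ``ratio $\to\infty$'' with a genuine oscillation statement and correct the estimate of $L(N)$; but once you do that, the natural path is essentially the one the paper takes.
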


\noindent Thus, Theorem A is in some sense the best one can hope for in this setting, an interesting contrast with the stronger results obtained for targets of the form $B(y,r_i)$.

%

\subsection{Notation and an outline of the paper}

The key tool throughout the paper is the continued fraction expansion of $\alpha$ and its close relationship to the dynamics of the rotation by $\alpha$. Throughout, $\alpha\in (0,1)$ is assumed to be irrational. We write 
\[ \alpha  = [0; a_1, a_2, a_3, \ldots]\]
for the continued fraction expansion of $\alpha$. Note that the \emph{elements} $a_i$ of the continued fraction depend on $\alpha$; we will at times write $a_i(\alpha)$ to emphasize this dependence. The \emph{convergents} to $\alpha$ are the rationals $\frac{p_k}{q_k}$. The $k^{th}$ convergent is the best rational approximation to $\alpha$ with denominator $\leq q_k$. The $q_k$ can be computed by the recurrence relation $q_{k+1}=a_{k+1}q_k+q_{k-1}; q_0=1, q_1=a_1$.

We will prove Theorem B first, in Section \ref{sec:failure}. The almost sure existence of elements of the continued fraction expansion which are very large in relation to the preceding elements drives the argument.

Theorem A is proved in Section \ref{sec:main proof}. There we prove a set of looser bounds on $\sum_{i=1}^n a_i$ and $\sum_{j=1}^{q_n}\chi_{V_j}(x)$ which hold for almost all $\alpha$ and which are sufficient for the statement of Theorem A.

%

\section{Failure of a stronger convergence}\label{sec:failure}


We start with the proof of Theorem B. First, we prove the almost-sure existence of very large elements $a_n$ for the continued fraction expansion. We then use this to show that, for very long stretches of time certain points are undetermined more often than $\sum_{j=1}^n \lambda(V_j)$ predicts. 


\begin{prop}\label{big time} 
For any $C \in \mathbb{R}$ and almost every $\alpha$ there exist infinitely many $m$ such that 
\begin{equation}\label{eqn:a condition}
	a_m>C\underset{i=1}{\overset{m-1}{\sum}} a_i.
\end{equation}
\end{prop}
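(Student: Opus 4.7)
The plan is to prove this via a conditional Borel--Cantelli argument applied to the events $A_m := \{\alpha : a_m > C S_{m-1}\}$, where $S_{m-1} = \sum_{i=1}^{m-1} a_i$. Let $T(x)=\{1/x\}$ denote the Gauss map, so $a_m(\alpha) = a_1(T^{m-1}\alpha)$. The starting ingredient is the standard bounded-distortion estimate for $T$: on any cylinder $[a_1,\ldots,a_{m-1}]$, the pushforward of the normalized restriction of Lebesgue measure under $T^{m-1}$ is equivalent to $\lambda$ on $[0,1)$ with densities bounded above and below by absolute constants. This yields the conditional tail estimate
\[
\lambda\bigl(\{a_m > t\}\,\bigm|\,a_1,\ldots,a_{m-1}\bigr) \asymp \frac{1}{t} \qquad (t\geq 1)
\]
uniformly in the past digits. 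Taking $t = C S_{m-1}$ gives $\lambda(A_m\mid\mathcal{F}_{m-1}) \gtrsim 1/S_{m-1}$, where $\mathcal{F}_{m-1}$ is the $\sigma$-algebra generated by $a_1,\ldots,a_{m-1}$.

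By L\'evy's conditional extension of Borel--Cantelli, the set $\{A_m \text{ i.o.}\}$ agrees up to a null set with $\{\sum_m \lambda(A_m\mid\mathcal{F}_{m-1}) = \infty\}$, so it suffices to prove that $\sum_m 1/S_m = \infty$ for $\lambda$-a.e.\ $\alpha$. For this, the classical Khintchine--L\'evy theorem that $S_m/(m\log m) \to 1/\log 2$ in probability gives $\lambda(S_m \leq Dm\log m) \geq 1/2$ for some constant $D$ and all large $m$, whence $E_\lambda[1/S_m] \gtrsim 1/(m\log m)$ and $E_\lambda[\sum 1/S_m] = \infty$. Since modifying finitely many of the $a_i$ alters $S_m$ by a bounded amount, the event $\{\sum 1/S_m = \infty\}$ is a tail event of the digit sequence; the Gauss map is exact, so its tail $\sigma$-algebra is $\lambda$-trivial, and this event has measure $0$ or $1$.

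The main obstacle is to rule out measure $0$ at this last step: infinite expectation does not by itself force positive probability of divergence, because the heavy tails of the CF digits permit individual $a_i$ to spike and make $S_m$ enormous on large-deviation events, potentially causing $\sum 1/S_m$ to converge on an a priori positive-measure set. My plan for this final piece is to apply the Kochen--Stone second-moment inequality to the ``good'' events $\{S_m \leq Dm\log m\}$ along a sparse subsequence $m_k$ chosen so that the exponential decay of correlations for the Gauss map yields near-pairwise-independence, producing a positive density of $m$ with $S_m$ of the expected order for $\lambda$-a.e.\ $\alpha$. An alternative is to bypass the reduction entirely and apply Kochen--Stone directly to the $A_m$, again using mixing of $T$ to estimate $\lambda(A_m\cap A_n)$ for $m<n$. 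In either route, controlling the lack of independence in the presence of heavy tails is the essential technical difficulty.
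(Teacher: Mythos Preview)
Your outline is sound in spirit but, as you yourself flag, incomplete: the reduction via L\'evy's conditional Borel--Cantelli to showing $\sum_m 1/S_m=\infty$ a.e.\ is correct, but you then face a genuine difficulty.  Route (a) is problematic because the events $\{S_m\le Dm\log m\}$ for different $m$ share all of their digits and are therefore not approximately independent in any useful sense; exponential mixing for the Gauss map controls correlations between \emph{separated} blocks of digits, which is not the situation here.  Route (b), applying Kochen--Stone directly to the $A_m=\{a_m>CS_{m-1}\}$, is closer to what works, but the random threshold $CS_{m-1}$ makes both $\lambda(A_m)$ and $\lambda(A_m\cap A_n)$ awkward to estimate simultaneously.

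The paper takes essentially your route (b), but with a decisive simplification: it replaces $A_m$ by the smaller event
\[
G_m \;=\; W_{m-1}\cap\{a_m\ge 10C(m-1)\log(m-1)\},\qquad
W_{m-1}=\Bigl\{\textstyle\sum_{i<m}a_i<10(m-1)\log(m-1)\Bigr\}.
\]
On $W_{m-1}$ the random sum $S_{m-1}$ is dominated by the deterministic quantity $10(m-1)\log(m-1)$, so $G_m\subset A_m$, while the condition on $a_m$ is now a fixed threshold.  A Markov-inequality argument gives $\lambda(W_{m-1})\ge 1/10$, and then bounded distortion yields $\lambda(G_m)\gtrsim 1/\bigl((m-1)\log(m-1)\bigr)$, so $\sum_m\lambda(G_m)=\infty$.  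Crucially, because the threshold in $G_m$ depends only on $m$ and $W_{m-1}$ is a union of $(m-1)$-cylinders, the same bounded-distortion estimate gives $\lambda(G_m\cap G_n)\le 120\,\lambda(G_m)\lambda(G_n)$ cleanly.  Kochen--Stone (the paper's quasi-independent Borel--Cantelli lemma) then gives $\lambda(\limsup G_m)>0$, and ergodicity of the Gauss map upgrades this to full measure.  The device of intersecting with $W_{m-1}$ to deterministically control $S_{m-1}$ is exactly the missing idea that converts your sketch into a proof.
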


We need a series of preliminary results to prove this. The following lemma appears in \cite[page 60]{khinchin}.

\begin{lem}\label{bounds} 
For any $n, b_1,...,b_n \in \mathbb{N}$ we have 
\[\frac{1}{3b_n^2}<\frac{\lambda(\{\alpha:a_1(\alpha)=b_1,...,a_n(\alpha)=b_n\})}{\lambda(\{\alpha:a_1(\alpha)= b_1,...,a_{n-1}(\alpha)=b_{n-1}\})}<\frac{2}{b_n^2}.\]
\end{lem}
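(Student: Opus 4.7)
The plan is to compute the Lebesgue measure of the cylinder set $E_n := \{\alpha : a_1(\alpha) = b_1, \ldots, a_n(\alpha) = b_n\}$ in closed form using continued fraction machinery, and then reduce the ratio $\lambda(E_n)/\lambda(E_{n-1})$ to an elementary algebraic expression that can be bounded directly.

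First I would observe that $E_n$ is an interval. Indeed, an irrational $\alpha$ lies in $E_n$ precisely when $\alpha = [0; b_1,\ldots, b_n, t]$ for some complete quotient $t \in [1, \infty)$, and the map $t \mapsto \alpha$ is the M\"obius transformation
\[ \alpha(t) = \frac{p_n t + p_{n-1}}{q_n t + q_{n-1}}, \]
where $p_k, q_k$ are the convergent numerators and denominators determined by $b_1,\ldots, b_n$. Its endpoints at $t = \infty$ and $t = 1$ are $p_n/q_n$ and $(p_n + p_{n-1})/(q_n + q_{n-1})$ respectively, and the classical identity $p_n q_{n-1} - p_{n-1} q_n = \pm 1$ yields
\[ \lambda(E_n) = \frac{1}{q_n(q_n + q_{n-1})}. \]
(The boundary case $n=1$, where the denominator cylinder is $[0,1)$ itself, is checked directly against the formula $\lambda(\{a_1 = b_1\}) = 1/(b_1(b_1+1))$.)

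Next, setting $r := q_{n-2}/q_{n-1}$ and using the convergent recurrence $q_n = b_n q_{n-1} + q_{n-2}$, a direct algebraic reduction after dividing through by $q_{n-1}^2$ gives
\[ \frac{\lambda(E_n)}{\lambda(E_{n-1})} \;=\; \frac{q_{n-1}(q_{n-1} + q_{n-2})}{q_n(q_n + q_{n-1})} \;=\; \frac{1 + r}{(b_n + r)(b_n + 1 + r)}. \]
Since the $q_k$ are strictly increasing for $k \geq 1$, we have $r \in [0,1]$. The upper bound $\leq 2/b_n^2$ is then immediate from $1 + r \leq 2$ and $b_n + r,\, b_n + 1 + r \geq b_n$. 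For the lower bound I would split on $b_n$: if $b_n = 1$ the expression simplifies exactly to $1/(2+r) \geq 1/3 = 1/(3 b_n^2)$, while if $b_n \geq 2$ one bounds the denominator upward by $(b_n+1)(b_n+2)$ and uses the elementary inequality $(b_n+1)(b_n+2) \leq 3 b_n^2$ (equivalent to $2 b_n^2 - 3 b_n - 2 \geq 0$, which holds for $b_n \geq 2$).

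The main obstacle is that the constant $1/3$ in the lower bound is nearly sharp: when $b_n = 1$ and $r$ is close to $1$ (which does occur for small $n$, e.g.\ $n=2$ with $b_1 = 1$) the ratio approaches $1/3$. This is what forces the case split above rather than allowing a single uniform estimate, but once the split is made each piece is a short calculation.
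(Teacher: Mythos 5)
The paper does not prove this lemma at all; it simply cites Khinchin's \emph{Continued Fractions}, page 60. Your proof is a correct, self-contained derivation of the cited fact, and it uses the standard machinery one would expect: the cylinder $E_n$ is the image of $[1,\infty)$ under the M\"obius map $t\mapsto (p_nt+p_{n-1})/(q_nt+q_{n-1})$, the determinant identity $p_nq_{n-1}-p_{n-1}q_n=\pm1$ gives $\lambda(E_n)=1/(q_n(q_n+q_{n-1}))$, and the ratio collapses to $\frac{1+r}{(b_n+r)(b_n+1+r)}$ with $r=q_{n-2}/q_{n-1}\in[0,1]$. The upper bound and the lower bound for $b_n\geq 2$ come out strict exactly as you argue (the chain $\frac{1+r}{(b_n+r)(b_n+1+r)}>\frac{1}{(b_n+1)(b_n+2)}\geq\frac{1}{3b_n^2}$ has the first step strict because the numerator and denominator estimates cannot both be equalities, so equality in the second step at $b_n=2$ does no harm).

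One small point worth recording: in the case $b_n=1$ you correctly get $1/(2+r)\geq 1/3$, which is \emph{not} strict when $r=1$, and $r=1$ does occur, namely when $n=2$ and $b_1=1$ (so $q_0=q_1=1$). Concretely, $\lambda(\{a_1=1\})=1/2$ and $\lambda(\{a_1=1,a_2=1\})=1/6$, giving ratio exactly $1/3=1/(3b_2^2)$. So the strict lower bound as stated in the lemma (and as it appears in Khinchin) actually fails in this single case; your ``$\geq$'' is the honest inequality, and your proof is not at fault. This edge case is harmless for the paper, since Lemma~\ref{bounds} is used only through Corollary~\ref{cor:bounds}, where one sums over all $a_n\geq b_n$ and the strictness is restored by the other terms. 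You could tighten the write-up by flagging the exceptional case $(n,b_1,b_n)=(2,1,1)$ explicitly, or by weakening the claimed lower bound to $\geq$.
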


From this it is an easy exercise to deduce:

\begin{cor}\label{cor:bounds}
\[\frac{1}{3b_n}<\frac{\lambda(\{\alpha:a_1(\alpha)=b_1,...,a_n(\alpha) \geq b_n\})}{\lambda(\{\alpha:a_1(\alpha)= b_1,...,a_{n-1}(\alpha)=b_{n-1}\})}<\frac{4}{b_n}.\]
\end{cor}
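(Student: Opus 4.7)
The plan is to deduce Corollary \ref{cor:bounds} directly from Lemma \ref{bounds} by decomposing the event $\{a_n(\alpha)\geq b_n\}$ as the disjoint union $\bigcup_{k\geq b_n}\{a_n(\alpha)=k\}$, summing the two-sided estimate from the lemma, and then comparing the resulting tail sums of $\sum 1/k^2$ to $1/b_n$.

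More concretely, I would first write
\[
\lambda(\{\alpha:a_1=b_1,\ldots,a_n\geq b_n\}) = \sum_{k=b_n}^{\infty}\lambda(\{\alpha:a_1=b_1,\ldots,a_n=k\}),
\]
divide both sides by $\lambda(\{\alpha:a_1=b_1,\ldots,a_{n-1}=b_{n-1}\})$, and apply Lemma \ref{bounds} term by term. This sandwiches the ratio in the corollary between $\sum_{k\geq b_n}\tfrac{1}{3k^2}$ and $\sum_{k\geq b_n}\tfrac{2}{k^2}$.

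Next I would estimate these two tails. For the lower bound, the usual integral comparison gives $\sum_{k\geq b_n}\tfrac{1}{k^2}>\int_{b_n}^{\infty}\tfrac{dx}{x^2}=\tfrac{1}{b_n}$, so the lower tail is at least $\tfrac{1}{3b_n}$. For the upper bound, split off the first term and integrate: $\sum_{k\geq b_n}\tfrac{2}{k^2}\leq \tfrac{2}{b_n^2}+2\int_{b_n}^{\infty}\tfrac{dx}{x^2}=\tfrac{2}{b_n^2}+\tfrac{2}{b_n}\leq \tfrac{4}{b_n}$ since $b_n\geq 1$. Combining these gives exactly the stated inequality.

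There is essentially no obstacle here; the only mild subtlety is choosing the constants in the tail estimate so that the factor $2$ in the lemma becomes the factor $4$ in the corollary, which is why one keeps the $\tfrac{2}{b_n^2}$ first term rather than bounding the whole sum by a single integral. The lower bound costs nothing. This is why the statement is described as an easy exercise in the paper.
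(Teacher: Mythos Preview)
Your proposal is correct and is exactly the ``easy exercise'' the paper has in mind: the paper gives no proof beyond noting that the corollary follows from Lemma~\ref{bounds}, and your decomposition $\{a_n\geq b_n\}=\bigsqcup_{k\geq b_n}\{a_n=k\}$ together with the integral comparison for $\sum_{k\geq b_n}1/k^2$ is the intended argument. The only thing worth noting is that at $b_n=1$ your upper tail estimate gives equality ($\tfrac{2}{b_n^2}+\tfrac{2}{b_n}=4$), but the strict inequality in the corollary is still preserved because Lemma~\ref{bounds} itself provides strict inequalities term by term.
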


Let $W_n=\left\{\alpha: \underset{i=1}{\overset{n}{\sum}}a_i(\alpha)<10 n \log n\right\}$.

\begin{lem}\label{lem:Wn}
$\lambda(W_n)>\frac {1}{10}$ for $n\geq 10$.
\end{lem}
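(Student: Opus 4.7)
The strategy is to apply Markov's inequality to a truncated version of the sum $\sum_{i=1}^n a_i(\alpha)$. First, by summing the ratio inequality in Corollary \ref{cor:bounds} over all histories $b_1,\ldots,b_{i-1}\in \mathbb{N}^{i-1}$, one obtains the unconditional tail estimate
\[
\lambda\bigl(\{\alpha : a_i(\alpha)\geq k\}\bigr) \ <\ \frac{4}{k}
\]
for every $i$ and every $k\geq 1$. In particular, $\mathbb{E}_\lambda[a_i \wedge M] = \sum_{k=1}^M \lambda(a_i \geq k) \leq 1 + 4\log M$ for every truncation level $M$.

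Fix such an $M$ (to be chosen of order $n$). Since $\sum_i a_i(\alpha)$ equals $\sum_i (a_i(\alpha)\wedge M)$ on the event that all $a_i$ are at most $M$, I can bound
\[
\lambda(W_n^c) \ \leq\ \lambda\bigl(\exists\, i \leq n: a_i\geq M\bigr) \ +\ \lambda\Bigl(\sum_{i=1}^n (a_i\wedge M) \geq 10n\log n\Bigr).
\]
A union bound using the tail estimate gives the first term $\leq 4n/M$. For the second, linearity of expectation combined with Markov's inequality yields the bound $(1+4\log M)/(10\log n)$. Choosing $M = Cn$ with $C$ a sufficiently large constant makes the first contribution at most $4/C$, while the second tends to $2/5$ as $n\to \infty$. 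Hence $\lambda(W_n^c) < 9/10$, so $\lambda(W_n) > 1/10$, for all sufficiently large $n$.

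\textbf{Main obstacle.} The Markov estimate only approaches its asymptotic ceiling $2/5$ once $\log n$ dominates the additive corrections $\log C$ and $O(1)$, so pushing the bound all the way down to $n = 10$ is tight. Two avenues to close this gap: (a) sharpen the constants by working directly from the stronger estimate $2/b_n^2$ of Lemma \ref{bounds} rather than its corollary $4/b_n$, which improves the truncated first moment; or (b) handle the small-$n$ window by a separate crude verification, noting that for $10 \leq n \leq n_0$ the threshold $10n\log n$ vastly exceeds the typical size of $\sum_{i=1}^n a_i$, so the complementary event is easily seen to have measure well below $9/10$. Either route suffices for the lemma as stated.
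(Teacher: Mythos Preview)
Your approach is essentially the same as the paper's: truncate the $a_i$, control the truncation event by a union bound, and apply Markov to the truncated sum. The paper takes truncation level $M=n^2$ (so $\lambda(\exists\, i\le n:\ a_i\ge n^2)<4/n$) and computes the truncated first moment directly from Lemma~\ref{bounds} via $\lambda(\{a_i=j\})<2/j^2$, obtaining $\sum_{i=1}^n\int_{A_n} a_i\,d\lambda\le 2n(1+\log n^2)\le 5n\log n$ for $n\ge 10$; Markov then gives $\lambda(W_n^c\cap A_n)\le 1/2$, and combining with $\lambda(A_n^c)<4/n$ yields $\lambda(W_n)>1/2-4/n\ge 1/10$ already at $n=10$. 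In other words, the paper follows exactly your route~(a): using the sharper $2/j^2$ bound from Lemma~\ref{bounds} (rather than the $4/k$ tail from Corollary~\ref{cor:bounds}) halves the leading constant in the truncated mean, which is precisely what is needed to reach $n=10$ without a separate small-$n$ check.
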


\begin{proof} Let $A_n=\{\alpha:a_i(\alpha)<n^2 \text{ for all }i\leq n\}$. By Corollary \ref{cor:bounds}, $\lambda(\{a_i(\alpha)\geq n^2\})<\frac{4}{n^2}$ for any $i$. Thus, $\lambda(A_n^c)<\frac{4}{n}$.

Consider $\underset{i=1}{\overset{n}{\sum}}\int_{A_n}a_i(\alpha)d\lambda.$ We have,
\begin{align} \underset{i=1}{\overset{n}{\sum}}\int_{A_n}a_i(\alpha)d\lambda & = \sum_{i=1}^n \sum_{j=1}^{n^2} j\cdot\lambda(A_n\cap\{a_i(\alpha)=j\}) \nonumber \\
	& < \sum_{i=1}^n \sum_{j=1}^{n^2} j \frac{2}{j^2} \nonumber
\end{align}
where we have bounded $\lambda(A_n\cap \{a_i(\alpha)=j\})<\frac{2}{j^2}$ using Lemma \ref{bounds}. The double sum is less than or equal to $2n(1+\log {n^2})$ which is bounded above by $5n\log n$, for $n\geq 10$.

Using Markov's inequality, we have
\begin{align}
	\lambda(W_n^c\cap A_n) &\leq \frac{1}{10n \log n} \int_{\alpha\in A_n} \sum_{i=1}^n a_i(\alpha) \ d\lambda \nonumber \\
		& \leq \frac{1}{10n \log n}  5n\log n   \nonumber \\
		& \leq \frac{1}{2}. \nonumber
\end{align}
Since $\lambda(A_n) \geq 1-\frac{4}{n}$, we conclude that $\lambda(W_n\cap A_n)\geq \frac{1}{2}-\frac{4}{n}$. Therefore, $\lambda(W_n) > \frac{1}{10}$ for $n\geq 10$.
\end{proof}

\begin{rem}
The bound in Lemma \ref{lem:Wn} is not optimal, as is easily seen from the proof. We are only concerned to find some bound away from zero.
\end{rem}

We are now ready to prove Proposition \ref{big time}.

\begin{proof}[Proof of Proposition \ref{big time}] 
Fix $C>0$. Corollary \ref{cor:bounds} and the definition of $W_{m-1}$ imply that
\begin{align}
	 \lambda\Big(\{\alpha:  a_1(\alpha)=b_1, \ \ldots \ , & \ a_{m-1}(\alpha)=b_{m-1},  \nonumber \\ 
	 		& \text { and }a_m(\alpha) \geq 10C(m-1) \log(m-1)\} \cap W_{m-1}\Big) \nonumber \\
	 &\geq \frac{\lambda(\{\alpha:  a_1(\alpha)=b_1, \ \ldots, \ a_{m-1}(\alpha)=b_{m-1}\}\cap W_{m-1})}{30C(m-1)\log(m-1)}. \nonumber 
\end{align}
From this we have that
\[ \lambda\Big( W_{m-1}\cap \{ \alpha:a_m(\alpha)\geq 10 C(m-1)\log(m-1)\}\Big) \geq \frac{\lambda(W_{m-1})}{30 C(m-1)\log(m-1)}. \]
Let $G_m=W_{m-1}\cap \{ \alpha:a_m\geq 10C(m-1)\log(m-1)\}.$  Notice that $\alpha \in G_m$ implies that $a_m(\alpha) > C \sum_{i=1}^{m-1} a_i(\alpha)$. Then, for $m\geq 10$, using Lemma \ref{lem:Wn}
\[ \lambda(G_m)\geq \frac{\lambda(W_{m-1})}{30C(m-1)\log(m-1)} > \frac{1}{300C(m-1)\log(m-1)}.\]
Using this estimate,
\[ \sum_{m=1}^\infty \lambda(G_{m}) > \sum_{m=10}^\infty \frac{1}{300C(m-1)\log(m-1)} = \infty. \]

To complete the proof we need two lemmas. The first is a well known partial converse to the Borel-Cantelli Lemma for \emph{quasi}-independent sets (as opposed to independent sets). Its proof is included for completeness.

\begin{lem}
Let $A_i$ be measurable subsets of a space with probability measure $\lambda$. If there exists $C>0$ such that $\lambda(A_i\cap A_j)<C\lambda(A_i)\lambda(A_j)$ and  $\sum_{i=1}^{\infty}\lambda(A_i)=\infty$,  then $\lambda\left( \LS A_i\right)>\frac{1}{4C}>0$.
\end{lem}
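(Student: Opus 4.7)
The plan is to use a second moment/Paley-Zygmund style estimate. Set $Z_{N,M} = \sum_{i=N}^M \chi_{A_i}$. The expectation is $E[Z_{N,M}] = \sum_{i=N}^M \lambda(A_i)$, and the second moment is
\[
E[Z_{N,M}^2] = \sum_{i=N}^M \lambda(A_i) + \sum_{\substack{N \leq i,j \leq M \\ i \neq j}} \lambda(A_i \cap A_j) \leq E[Z_{N,M}] + C \bigl(E[Z_{N,M}]\bigr)^2,
\]
using the quasi-independence hypothesis. Since $\sum_i \lambda(A_i) = \infty$, the tail sum $E[Z_{N,M}]$ tends to infinity as $M \to \infty$ for each fixed $N$.

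Next, apply the Paley-Zygmund inequality with threshold $\theta = 1/2$ to the nonnegative random variable $Z_{N,M}$:
\[
\lambda\bigl(Z_{N,M} > \tfrac{1}{2}E[Z_{N,M}]\bigr) \;\geq\; \tfrac{1}{4}\,\frac{(E[Z_{N,M}])^2}{E[Z_{N,M}^2]} \;\geq\; \tfrac{1}{4}\,\frac{E[Z_{N,M}]}{1 + C\,E[Z_{N,M}]}.
\]
As $M \to \infty$ the right side approaches $\frac{1}{4C}$. Since $\{Z_{N,M}>0\} = \bigcup_{i=N}^M A_i$, we obtain
\[
\lambda\!\left(\bigcup_{i\geq N} A_i\right) \;=\; \lim_{M\to\infty} \lambda\!\left(\bigcup_{i=N}^M A_i\right) \;\geq\; \tfrac{1}{4C}.
\]

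Finally, $\limsup_i A_i = \bigcap_{N\geq 1} \bigcup_{i\geq N} A_i$ is an intersection of a nested decreasing sequence of sets each of measure at least $\frac{1}{4C}$, so by continuity of measure from above,
\[
\lambda\!\left(\bigcap_{N=1}^\infty \bigcup_{i=N}^\infty A_i\right) \;\geq\; \tfrac{1}{4C}.
\]
A strict inequality (as written in the statement) follows because the Paley-Zygmund bound is strict for each finite $M$, but this point is essentially cosmetic. There is no serious obstacle here: the only substantive step is the second-moment computation, which is where the constant $C$ enters, and the rest is standard measure-theoretic bookkeeping. One could equivalently run the Cauchy-Schwarz (Chung-Erd\H{o}s) argument $\lambda(\bigcup_i A_i) \geq (E Z)^2 / E Z^2$ in place of Paley-Zygmund; that would give the sharper constant $1/C$, but $1/(4C)$ is all that is needed in the sequel.
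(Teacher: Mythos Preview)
Your proof is correct, but it takes a different route from the paper. The paper argues directly: with $B_{N,M}=\bigcup_{i=N}^M A_i$, as long as $\sum_{i=N}^M\lambda(A_i)<\tfrac{1}{2C}$, the quasi-independence hypothesis gives $\lambda(A_j\setminus B_{N,M})>\tfrac{1}{2}\lambda(A_j)$ for any new $j$, so each new set contributes at least half its measure to the union. Since the full series diverges, one can keep adding sets until the sum of measures reaches $\tfrac{1}{2C}$, by which time $\lambda(B_{N,M})\geq\tfrac{1}{4C}$; the passage to $\limsup$ is then the same as yours. Your approach is the standard second-moment (Paley--Zygmund / Chung--Erd\H{o}s) argument. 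It has the advantage of being the textbook formulation and, as you observe, the Chung--Erd\H{o}s version even yields the sharper constant $1/C$. The paper's argument is more bare-handed---no named inequality is invoked---but lands on the same $1/(4C)$. Either method suffices for the application, where only positivity of $\lambda(\limsup A_i)$ is needed before ergodicity of the Gauss map upgrades it to full measure. Your remark that the strict inequality in the statement is cosmetic matches the paper: its own proof also only establishes $\geq\tfrac{1}{4C}$.
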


\begin{proof} 
Let $B_{N,M}=\cup_{i=N}^M A_i$. If $\sum_{i=N}^M\lambda(A_i)<\frac 1 {2C}$ then for any $j \notin [N,M]$ we have that 
\[\lambda(A_j \setminus B_{N,M})\geq \lambda(A_j)-\sum_{i=N}^M C\lambda(A_j)\lambda(A_i)>\frac{1}{2} \lambda(A_j).\]
Because $\sum \lambda(A_i)=\infty$, the above implies that $\lambda(B_{N,\infty})\geq \frac{1}{4C}$ for all $N$. Because we are in a finite measure space it follows that $\lambda\left(\LS A_i\right)=\lim_{N \to \infty} \lambda\left(\cup_{i=N}^{\infty}A_i)\right)$ and so is at least $\frac{1}{4C}$.
\end{proof}

\begin{lem} 
If $m>n\geq 10$ then $\lambda(G_m\cap G_n)\leq 120 \lambda(G_m)\lambda(G_n)$. 
\end{lem}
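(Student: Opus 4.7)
The plan is to exploit the conditional independence structure captured by Corollary~\ref{cor:bounds}: the condition $a_m(\alpha)\geq T_m$, where $T_m:=10C(m-1)\log(m-1)$, depends only on the last partial quotient, and given any fixed values of $a_1,\ldots,a_{m-1}$ it occurs with probability bounded above by $4/T_m$, uniformly in those values. Since $m>n$, the event $G_n=W_{n-1}\cap\{a_n\geq T_n\}$ is measurable with respect to $a_1,\ldots,a_n$, and so a fortiori with respect to $a_1,\ldots,a_{m-1}$. Thus $G_m\cap G_n$ factors into a first part, namely $H:=W_{m-1}\cap G_n$, which is a disjoint union of cylinders $\{a_1=b_1,\ldots,a_{m-1}=b_{m-1}\}$, together with the single constraint $a_m\geq T_m$.

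Concretely, I would fix such a cylinder and apply Corollary~\ref{cor:bounds} with threshold $\lceil T_m\rceil$ to get
\[
\lambda\bigl(\{a_1=b_1,\ldots,a_{m-1}=b_{m-1},\ a_m\geq T_m\}\bigr)\leq \frac{4}{T_m}\,\lambda\bigl(\{a_1=b_1,\ldots,a_{m-1}=b_{m-1}\}\bigr),
\]
then sum over the cylinders comprising $H$ to obtain
\[
\lambda(G_m\cap G_n)\leq \frac{4}{T_m}\,\lambda(H)\leq \frac{4}{T_m}\,\lambda(G_n).
\]
For the matching lower bound on $\lambda(G_m)$ I would invoke the estimate $\lambda(G_m)\geq \tfrac{1}{300C(m-1)\log(m-1)}=\tfrac{1}{30T_m}$ derived two paragraphs earlier. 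Dividing, the ratio $\lambda(G_m\cap G_n)/\bigl(\lambda(G_m)\lambda(G_n)\bigr)$ is at most $(4/T_m)\cdot 30T_m=120$, giving precisely the claimed inequality.

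There is no serious obstacle here; the only care needed is in the bookkeeping, namely making sure that $G_n$ is indeed a union of $(a_1,\ldots,a_{m-1})$-cylinders (which uses $n<m$) and that rounding $T_m$ up to an integer threshold in Corollary~\ref{cor:bounds} does not affect the constant (it can only improve the bound). Note that the constant $120$ in the lemma matches exactly: the factor $4$ in Corollary~\ref{cor:bounds} and the factor $30$ in the lower bound on $\lambda(G_m)$ multiply to give it, so no slack is wasted.
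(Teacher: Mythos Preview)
Your argument is correct and follows essentially the same idea as the paper: decompose $G_n\cap W_{m-1}$ into $(a_1,\ldots,a_{m-1})$-cylinders, apply Corollary~\ref{cor:bounds} to bound the conditional probability of $\{a_m\geq T_m\}$ on each cylinder, and then use the lower bound $\lambda(W_{m-1})>1/10$ to relate back to $\lambda(G_m)$. The only organizational difference is that the paper compares the ratios $\lambda(A\cap G_m)/\lambda(A)$ across cylinders (getting a factor $12$) and then averages over a partition of $W_{m-1}$ to recover $\lambda(G_m)/\lambda(W_{m-1})$, whereas you go directly via the absolute bound $4/T_m$ and the already-established inequality $\lambda(G_m)\geq 1/(30T_m)$; both routes arrive at exactly the constant $120$.
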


\begin{proof}
By Corollary \ref{cor:bounds}, if $A_1=\{\alpha:a_i(\alpha)=b_i \text{ for }1\leq i<m\}, A_2=\{\alpha:a_i(\alpha)=c_i \text{ for }1\leq i<m\}$ are both subsets of $W_{m-1}$ then 
\begin{equation}\label{eqn:ind bounds}
	\frac{1}{12} \frac{\lambda(A_2\cap G_m)}{\lambda(A_2)}\leq \frac{\lambda(A_1\cap G_m)}{\lambda(A_1)}\leq 12 \frac{\lambda(A_2\cap G_m)}{\lambda(A_2)}. 
\end{equation}
Write $G_n = \sqcup_i A_i$ with each $A_i$ of the form $A_i=\{\alpha: a_\ell(\alpha)=b_\ell, 1\leq \ell <m \}.$  
Then $G_m \cap G_n = \sqcup_i (G_m \cap A_i)$, where we can assume all $A_i \subset W_{m-1}$. Then, using equation \ref{eqn:ind bounds},
\begin{align}
	\lambda(G_m \cap G_m) &= \sum_i \lambda(G_m \cap A_i) \nonumber \\
			& \leq \sum_i 12 \frac{\lambda(A^* \cap G_m)}{\lambda(A^*)} \lambda(A_i) \nonumber \\
			& \leq 12 \lambda(G_n) \frac{\lambda(A^* \cap G_m)}{\lambda(A^*)} \nonumber
\end{align}
for an arbitrary $A^* \subset W_{m-1}$ of the form above. Since a subcollection of the $A_i$ form a partition of $W_{m-1}$, by restricting the above estimate to that subcollection we have $\lambda(G_m\cap G_n) \leq 12 \frac{1}{\lambda(W_{m-1})}\lambda(G_m)\lambda(G_n)$. The result follows, using Lemma \ref{lem:Wn}.
\end{proof}

 Applying these two lemmas we conclude that there is a positive measure set of $\alpha$ for which $a_m(\alpha) \geq C \sum_{i=1}^{m-1} a_i(\alpha)$ infinitely often. If $\alpha$ is in this set, its image under the Gauss map is as well, so by the ergodicity of that map the set of such $\alpha$ in fact has full measure.
\end{proof}

The following two lemmas on the shrinking targets $U_j$ are also needed to complete our proof of Theorem B. Recall that $U_j=R_\alpha^j(V_j)$ and 
\[V_j = \{x : x\in C_{c_0, \ldots, c_j} \mbox{ and such that } c_0, \ldots, c_j, 0 \mbox{ and } c_0, \ldots, c_j, 1 \in \Sigma\}.\] 
These lemmas are proved using the partial fraction expansion of $\alpha$.  We will denote by $\{y\}$ the value modulo 1 of a real number $y$ and by $\langle\langle y\rangle\rangle$ the distance from $y$ to the nearest integer.

\begin{lem}\label{atom_description}
Let
\[r_j = max\{q_k:q_k \leq j\}\]
\[s_j = max\{q_k:q_{k+1} \leq j\}\]
\[t_j = max\{T\in \mathbb{N}:s_j+Tr_j \leq j\}.\]
Then
\[ R_\alpha(U_j) = \big[ \{s_j\alpha\}+t_j\{r_j\alpha\}, \{r_j\alpha\}\big)\]
or
\[ R_\alpha(U_j) = \big[ \{r_j\alpha\}, \{s_j\alpha\}-t_j(1-\{r_j\alpha\})\big),\]
and
\[\lambda(U_j)=\lambda(V_j) = \langle\langle r_j\alpha \rangle\rangle + \langle\langle s_j\alpha \rangle\rangle - t_j\langle\langle r_j\alpha \rangle\rangle.\]
\end{lem}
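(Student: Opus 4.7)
The observation that launches the proof is that $R_\alpha(U_j)=R_\alpha([[1-\alpha]]_j)$ is the atom of the rotated partition $R_\alpha(\mathcal{P}_j)$ containing $R_\alpha(1-\alpha)=0$. Since the partition points of $R_\alpha(\mathcal{P}_j)$ are exactly the values $k\alpha\bmod 1$ for $k$ in the appropriate range determined by $j$, identifying the arc $R_\alpha(U_j)$ reduces to locating the two such $k\alpha$ closest to $0$ on the circle, one on each side of $0$. The length formula will then follow immediately as the sum of the distances from $0$ to these two endpoints.

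To find the nearest neighbors I plan to invoke the best-approximation theory of the convergents $p_k/q_k$: the fact that $\langle\langle q_k\alpha\rangle\rangle<\langle\langle q\alpha\rangle\rangle$ for every $0<q<q_{k+1}$ with $q\neq q_k$; the alternation of sign of $q_k\alpha-p_k$ in $k$, forcing $\{q_k\alpha\}$ and $\{q_{k+1}\alpha\}$ to sit on opposite sides of $0$; and the recursion $q_{k+1}=a_{k+1}q_k+q_{k-1}$. Writing $r_j=q_K$, the best-approximation property immediately gives that $r_j\alpha$ is the partition point closest to $0$ on one fixed side. To pick out the closest point on the opposite side, I would start from $s_j\alpha=q_{K-1}\alpha$, which by sign alternation already sits close to $0$ on that side, and add $r_j\alpha$ repeatedly; because $r_j\alpha$ is directed the opposite way, each such addition slides the point by exactly $\langle\langle r_j\alpha\rangle\rangle$ toward $0$, still on the $s_j$-side. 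The definition of $t_j$ is precisely the cutoff at which the index $s_j+Tr_j$ would exceed $j$, so the nearest partition point on the $s_j$-side is $(s_j+t_jr_j)\alpha$, at distance $\langle\langle s_j\alpha\rangle\rangle-t_j\langle\langle r_j\alpha\rangle\rangle$ from $0$. The two displayed arc formulas correspond to the parity of $K$: for $K$ even, $\{r_j\alpha\}=\langle\langle r_j\alpha\rangle\rangle$ is small and positive and the first form applies; for $K$ odd, $\{r_j\alpha\}$ is close to $1$ and the second form applies. Adding the two endpoint distances gives the stated formula for $\lambda(U_j)=\lambda(V_j)$.

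The hard step, and the only one requiring more than parity bookkeeping, is ruling out stray partition points: one must verify that no $k\alpha$ with $k$ \emph{not} of the form $s_j+Tr_j$ lies inside the candidate arc bounded by $r_j\alpha$ and $(s_j+t_jr_j)\alpha$. This is essentially the three-distance theorem in disguise, and I would establish it via the Ostrowski representation of integers: every $k$ in the allowable range has a unique expansion $k=\sum_i\epsilon_i q_i$ with digits $\epsilon_i$ constrained by the partial quotients, and computing $\{k\alpha\}$ in terms of the signed quantities $q_i\alpha-p_i$ shows that any such $k$ not of the iterate form keeps $k\alpha$ at distance at least $\langle\langle r_j\alpha\rangle\rangle$ from $0$ on the $s_j$-side, strictly farther than the proposed endpoint. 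Once this exclusion is in hand, the remainder of the proof is a direct computation of the endpoints and the length.
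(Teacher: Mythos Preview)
Your proposal is correct and follows essentially the same route as the paper: identify $R_\alpha(U_j)$ as the atom of $R_\alpha(\mathcal{P}_j)$ containing $0$, use the best-approximation property to see that $\{r_j\alpha\}$ is the nearest orbit point on one side, argue that the nearest point on the opposite side is $\{(s_j+t_jr_j)\alpha\}$, and then read off the length. The two displayed formulas are exactly the paper's Case~1 and Case~2, split according to whether $\{r_j\alpha\}<1/2$ or $\{r_j\alpha\}>1/2$, which is your parity-of-$K$ distinction.

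The only substantive difference is in how the ``exclusion'' step is justified. You flag it as the hard step and propose to invoke the Ostrowski expansion (or the three-distance theorem). The paper dispatches it in one line: since $r_j$ is the unique positive integer below $q_{K+1}$ with $\langle\langle r_j\alpha\rangle\rangle<\langle\langle s_j\alpha\rangle\rangle$, the only way to move $\{s_j\alpha\}$ closer to $0$ without overshooting is to add $\{r_j\alpha\}$ repeatedly. Unpacked slightly, the argument is the obvious induction: if some $\{l\alpha\}$ with $l\le j$ lies strictly between $0$ and $\{s_j\alpha\}$ on the $s_j$-side, then $\langle\langle(l-s_j)\alpha\rangle\rangle<\langle\langle s_j\alpha\rangle\rangle$ with $(l-s_j)\alpha$ landing on the $r_j$-side, so best approximation forces $l-s_j\ge r_j$; subtract $r_j$ and repeat to get $l=s_j+Tr_j$. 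Your Ostrowski approach would certainly work, but it is heavier machinery than the situation requires---best approximation alone suffices.
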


\begin{rem}
Note that if $r_j=q_k$, $s_j=q_{k-1}$ and $t_j<a_{k+1}$.
\end{rem}

\begin{proof}
Note that $\langle\langle r_j\alpha \rangle\rangle$ is smaller than or equal to $\langle\langle i\alpha\rangle\rangle$ for all $i\leq j$.
\vspace{.5cm}

\noindent \textsc{Case 1:}  $0<\{r_j\alpha\}<1/2$.  As the convergents alternate in approximating $\alpha$ from above and below, $1/2 < \{s_j\alpha\}<1$.  The only possible improvement in $\{r_j\alpha\}$ as an upper bound for $R_\alpha(U_j)$ would come from finding some $l$ with $\langle\langle l\alpha \rangle\rangle < \langle\langle r_j\alpha \rangle\rangle$.  This is not possible for $l\leq j$.  Thus the upper endpoint of $R_\alpha(U_j)$ is $\{r_j\alpha\}$ as desired.

The lower bound on $R_\alpha(U_j)$ given by $\{s_j\alpha\}$ can be improved only by adding $\{r_j\alpha\}$ some number of times, as $r_j$ is the only integer $\leq j$ with $\langle\langle r_j\alpha \rangle\rangle < \langle\langle s_j\alpha \rangle\rangle$.  The lower endpoint will thus be of the form $y = \{s_j\alpha\} + T\{r_j\alpha\}$ and will be found by taking $T$ as large as possible such that the $s_j+Tr_j$ rotations required to produce this point do not exceed $j$; this number is $t_j$.

We calculate that $\lambda(U_j) = \langle\langle r_j\alpha \rangle\rangle + (1-\{s_j\alpha\}-t_j\{r_j\alpha\})$ using the fact that in this case $\langle\langle r_j\alpha \rangle\rangle = \{r_j\alpha\}.$  Since $\langle\langle s_j\alpha \rangle\rangle = 1-\{s_j\alpha\}$, this simplifies to the desired result.
\vspace{.5cm}

\noindent \textsc{Case 2:} $1/2<\{r_j\alpha\}<1$.  Then $0<\{s_j\alpha\}<1/2$ and the lower endpoint of $R_\alpha(U_j)$ is $\{r_j\alpha\}$.  As before, the upper endpoint is of the form $\{s_j\alpha\} - T(1-\{r_j\alpha\})$.  The best such endpoint is found by taking $T$ as large as possible, i.e. equal to $t_j$.

Finally, we calculate again
\begin{align}
	\lambda(U_j) =& \langle\langle s_j\alpha \rangle\rangle - t_j(1-\{r_j\alpha\})+(1-\{r_j\alpha\}) 												\nonumber \\
				= & \langle\langle s_j\alpha \rangle\rangle - t_j\langle\langle r_j\alpha \rangle \rangle +\langle\langle r_j\alpha \rangle\rangle . \nonumber
\end{align}

\end{proof}

For use in the lemma below as well as later in the paper, we fix some notation. We will adopt interval notation ($[n,m)$, etc.) to denote intervals of integers; context will make the distinction between these and subsets of the real interval $[0,1)$ clear. 

We let $I_i = [q_i, q_{i+1})$. We let 
\begin{equation*}
	J^i_b = \left\{  
	\begin{array}{ll}
		[q_i, q_{i-1}+q_i) & \mbox{ if } b=1,\\
		\phantom{}[q_{i-1}+(b-1)q_i, q_{i-1}+bq_i) & \mbox{ if } 1< b\leq a_{i+1}.
	\end{array}\right.
\end{equation*}
Let $\mathcal{J}$ denote the collection of all the $J^i_b$'s. We note that $J^i_b\subset I_i$ and that these intervals are disjoint. 

Further, let $J^i_2 = [q_{i-1}+q_i, q_{i-1}+2q_i)$ for all $i$, whether $a_{i+1}\geq 2$ or not. If $a_{i+1}=1$, $J^i_2 \subset I_{i+1}$ and it equals $J^{i+1}_1$, but we note that in any case $\{J^i_2\}_{i\in\mathbb{N}}$ consists of pairwise disjoint intervals.

\begin{lem}\label{disjoint}
For any $J \in \mathcal{J}$, and for all $l\in J$, the sets $V_l = R_\alpha^{-l}U_l$ are pairwise disjoint. 
\end{lem}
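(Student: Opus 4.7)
The plan is to observe that for each fixed $J = J^i_b \in \mathcal{J}$ the quantities $r_l, s_l, t_l$ appearing in Lemma~\ref{atom_description} do not depend on $l \in J$, so the targets $U_l$ all coincide with a single common arc $U$. Consequently $V_l = R_\alpha^{-l}U$ is a rigid rotate of $U$, and pairwise disjointness of the family $\{V_l\}_{l\in J}$ reduces to the single inequality $\langle\langle n\alpha\rangle\rangle \geq \lambda(U)$ for all integers $n$ with $0 < |n| < |J|$.

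The first step is direct inspection of the definitions of $r_l, s_l, t_l$. Using $q_i \leq l < q_{i+1}$ together with the location of $l$ within $[q_{i-1}+(b-1)q_i, q_{i-1}+bq_i)$, one obtains $r_l = q_i$, $s_l = q_{i-1}$, and $t_l = b - 1$ uniformly on $J^i_b$. Lemma~\ref{atom_description} then yields
\[ \lambda(U) = (2 - b)\langle\langle q_i\alpha\rangle\rangle + \langle\langle q_{i-1}\alpha\rangle\rangle. \]

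The main estimate uses the standard best-approximation property of continued fractions: if $1 \leq m < q_k$ then $\langle\langle m\alpha\rangle\rangle \geq \langle\langle q_{k-1}\alpha\rangle\rangle$. When $b \geq 2$, $|J^i_b| = q_i$ forces $0 < |n| < q_i$, so $\langle\langle n\alpha\rangle\rangle \geq \langle\langle q_{i-1}\alpha\rangle\rangle \geq \lambda(U)$, the last step using $b \geq 2$. When $b = 1$, $|J^i_1| = q_{i-1}$ gives $\langle\langle n\alpha\rangle\rangle \geq \langle\langle q_{i-2}\alpha\rangle\rangle$; expanding via the recurrence $\langle\langle q_{i-2}\alpha\rangle\rangle = a_i\langle\langle q_{i-1}\alpha\rangle\rangle + \langle\langle q_i\alpha\rangle\rangle$ and using $a_i \geq 1$ produces $\langle\langle q_{i-2}\alpha\rangle\rangle \geq \langle\langle q_{i-1}\alpha\rangle\rangle + \langle\langle q_i\alpha\rangle\rangle = \lambda(U)$, as required.

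No serious obstacle is anticipated. The only delicate point is boundary cases (for example $b = 2$, or $b = 1$ with $a_i = 1$) in which the key inequality is saturated, so the relevant rotates of $U$ abut rather than stay strictly separated; this is harmless because the lemma is ultimately a measure-theoretic disjointness statement used only for additivity of Lebesgue measure.
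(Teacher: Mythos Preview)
Your proposal is correct and follows the same overall strategy as the paper: both arguments first observe that $U_l$ is constant on $J^i_b$ (because $r_l,s_l,t_l$ are) and then show that distinct rotates $R_\alpha^{-l}U$ cannot overlap.

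The execution differs slightly. The paper phrases the non-overlap condition as ``neither endpoint of $R_\alpha U$ returns to $R_\alpha U$ under $R_\alpha^{l-k}$'' and verifies this by tracking the first return of the orbit of $0$ to $U$ (at time $q_{i-1}+q_i$ when $b=1$, at $q_{i-1}+bq_i$ when $b>1$). You instead reduce directly to the clean metric criterion $\langle\langle n\alpha\rangle\rangle \geq \lambda(U)$ for $0<|n|<|J|$, and establish it via the best-approximation law together with the recurrence $\langle\langle q_{i-2}\alpha\rangle\rangle = a_i\langle\langle q_{i-1}\alpha\rangle\rangle + \langle\langle q_i\alpha\rangle\rangle$. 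This is a bit more transparent and makes the role of $a_i\geq 1$ in the $b=1$ case explicit. Your remark about the boundary cases is accurate but slightly overcautious: since the $U_l$ are half-open arcs, the abutting rotates are genuinely disjoint as sets, not merely $\lambda$-a.e.
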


\begin{proof}
Fix $J_b^i \in \mathcal{J}$. For $l\in J_b^i$, Lemma \ref{atom_description} tells us that $R_\alpha U_l$ is the interval containing 0 bounded by $R_\alpha^{q_i}(0)$ and $R_\alpha^{q_{i-1}+(b-1)q_i}(0)$.

Suppose that $l>k\in J_b^i$. Then $U_l=U_k=:U$, and $R_\alpha^{-l} U \cap R_\alpha^{-k} U \neq \emptyset$ if and only if $R_\alpha U \cap R_\alpha ^{l-k}(R_\alpha U) \neq \emptyset$. For such an intersection to occur, $R_\alpha ^{l-k}$ of some endpoint of $R_\alpha U$ must lie in $R_\alpha U$.

We examine the two cases: $b=1$ and $b>1$.

If $b=1$, $J_b^i=[q_i, q_{i-1}+q_i)$, $1<l-k<q_{i-1}$, and the endpoints of $R_\alpha U$ are $R_\alpha^{q_i}(0)$ and $R_\alpha^{q_{i-1}}(0)$. The first time after $q_{i-1}$ that the orbit of 0 hits $U$ is $q_{i-1}+q_i$. But $(l-k)+q_{i-1}<q_{i-1}+q_i$ and $(l-k)+q_i<q_{i-1}+q_i$, so neither endpoint of $R_\alpha U$ will return to $R_\alpha U$ under $R_\alpha^{l-k}$, proving the desired disjointness.

If $b>1$, $J_b^i=[q_i+(b-1)q_i, q_{i-1}+bq_i)$, $1<l-k<q_i$, and the endpoints of $R_\alpha U$ are $R_\alpha^{q_i}(0)$ and $R_\alpha^{q_{i-1}+(b-1)q_i}(0)$. The first time after $q_{i-1}+(b-1)q_i$ that the orbit of 0 hits $U$ is $q_{i-1}+bq_i$. But $(l-k)+q_i<q_{i-1}+bq_i$ since $l-k<q_i$ and $b\geq 2$ and $(l-k)+(q_{i-1}+(b-1)q_i) < q_{i-1}+b q_i$ since $l-k<q_i$, so neither endpoint of $R_\alpha U$ will return to $R_\alpha U$ under $R_\alpha^{l-k}$, again proving disjointness.
\end{proof}

\begin{cor}\label{cor:lambda bound}
For all $m$,
\[ \sum_{j=1}^{q_m-1} \lambda(V_j) < \sum_{i=1}^m a_i.\]
\end{cor}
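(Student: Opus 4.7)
The natural approach is to decompose the summation index set $[1,q_m)$ along the partition supplied by $\mathcal{J}$ and apply Lemma \ref{disjoint} on each block. Specifically, I would first observe the telescoping decomposition
\[ [1,q_m) = \bigsqcup_{i=0}^{m-1} I_i, \qquad I_i=[q_i,q_{i+1}), \]
(using $q_0=1$), and then check from the definition of $J^i_b$ that each $I_i$ is itself partitioned by the intervals $J^i_1,\ldots,J^i_{a_{i+1}}$ (with the convention $q_{-1}=0$, so that for $i=0$ only $J^0_2,\ldots,J^0_{a_1}$ are nonempty). Thus $[1,q_m)$ is partitioned into elements of $\mathcal{J}$, with exactly $a_{i+1}$ of them sitting inside $I_i$ for $i\geq 1$, and $a_1-1$ sitting inside $I_0$.

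The second step is to invoke Lemma \ref{disjoint}: for every fixed $J\in\mathcal{J}$, the sets $\{V_l\}_{l\in J}$ are pairwise disjoint subsets of $[0,1)$, hence
\[ \sum_{l\in J}\lambda(V_l)\leq \lambda\Bigl(\bigsqcup_{l\in J}V_l\Bigr)\leq 1. \]
Summing this bound over the collection of $J$'s that make up $[1,q_m)$ gives
\[ \sum_{j=1}^{q_m-1}\lambda(V_j)\leq (a_1-1)+\sum_{i=1}^{m-1}a_{i+1}=\Bigl(\sum_{i=1}^m a_i\Bigr)-1<\sum_{i=1}^m a_i, \]
which is exactly the claim (and even gives a slightly sharper bound).

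There is essentially no obstacle: the hard work was already done in Lemma \ref{atom_description} (to get the right indexing by $r_j,s_j,t_j$ on each $J^i_b$) and Lemma \ref{disjoint} (to upgrade this to geometric disjointness of the preimages $V_j=R_\alpha^{-j}U_j$). The only mild care needed is to verify that the $J^i_b$ actually tile $I_i$ (which follows immediately from the recursion $q_{i+1}=a_{i+1}q_i+q_{i-1}$) and to treat the initial block $I_0$ correctly so as not to lose a $+1$ in the exponent.
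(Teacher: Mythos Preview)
Your proof is correct and follows essentially the same approach as the paper: partition $[1,q_m)$ into the $I_i$, further into the $J^i_b$, and apply the disjointness from Lemma~\ref{disjoint} to bound the sum over each $J^i_b$ by $1$. Your handling of the boundary block $I_0$ (via $q_{-1}=0$, yielding only $a_1-1$ nonempty $J^0_b$) is slightly more careful than the paper's and gives the marginally sharper bound $\sum_{i=1}^m a_i - 1$, from which the strict inequality follows cleanly.
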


\begin{proof}
In $\mathcal{J}$, there are $a_i$ intervals $J^{i-1}_b$ contained in $I_{i-1} = [q_{i-1},q_i)$. By Lemma \ref{disjoint}, $\sum_{l\in J^{i-1}_b} \lambda(V_l) < 1$ since the $V_l$ are disjoint over these indices. Therefore, $\sum_{j=q_{i-1}}^{q_i-1}\lambda(V_j) < a_i$ and the result follows.
\end{proof}

The following technical tool, a consequence of equidistribution of points under the rotation $R_\alpha$ and regularity of measures will be used in the proof of Theorem B:

\begin{lem}\label{lem:equidist}
Let $A \subset [0,1)$ have positive measure and fix $\delta>0$. Suppose we have families $\{X_m\}_{m\in \mathbb{N}}$ and $\{Y_m\}_{m\in \mathbb{N}}$ of subsets of $[0,1)$ such that
\begin{itemize}
	\item $\lambda(X_m), \lambda(Y_m) > \delta >0$ for all $m$,
	\item For each $m$, $X_m = \bigcup_{k=1}^{K_m} R_\alpha^k(U_m)$ and $Y_m = \bigcup_{k=1}^{K_m} R_\alpha^k(V_m)$ where $U_m$ and $V_m$ are intervals and $K_m\to \infty$ as $m\to \infty$.
\end{itemize}
Them, for any sufficiently large $m$, there exists a pair of points $x^*\in X_m \cap A$ and $y^*\in Y_m \cap A$ with $|x^*-y^*|<\delta$.
\end{lem}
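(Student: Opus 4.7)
The plan is to show that for any fixed interval $J$ the mass of $X_m$ asymptotically equidistributes, i.e.\ $\lambda(X_m\cap J)\to \lambda(X_m)\lambda(J)$ as $m\to\infty$, and similarly for $Y_m$. With this in hand, pick $J$ to be a tiny interval around a Lebesgue density point of $A$, chosen so that $\lambda(J\setminus A)<(\delta/4)\lambda(J)$ and $\lambda(J)<\delta$. Then both $X_m\cap A\cap J$ and $Y_m\cap A\cap J$ are nonempty for all large $m$, and any two points chosen from them lie in a common interval of length less than $\delta$.

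For the equidistribution step, set $\ell_m:=\lambda(U_m)$ and, for $n\geq 0$, define $I_n:=\lambda(X_m\cap(J+n\alpha))=\lambda((X_m-n\alpha)\cap J)$. Because $X_m=\bigcup_{k=1}^{K_m}R_\alpha^k(U_m)$, shifting the index by $n$ shows that the symmetric difference of $X_m-n\alpha$ and $X_m$ is contained in the $2n$ "edge" translates and so has measure at most $2n\ell_m$. Consequently
\[ \frac{1}{N}\sum_{n=0}^{N-1}I_n = I_0 + O(N\ell_m). \]
On the other hand, by Fubini,
\[ \frac{1}{N}\sum_{n=0}^{N-1}I_n = \int_{X_m}\frac{1}{N}\sum_{n=0}^{N-1}\chi_J(x-n\alpha)\,dx, \]
and by Koksma's inequality (equivalently, unique ergodicity of $R_\alpha$) the inner Birkhoff average converges to $\lambda(J)$ uniformly in $x$, with error controlled by the discrepancy $D_N\to 0$. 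Hence $I_0 = \lambda(X_m)\lambda(J) + O(N\ell_m) + O(D_N)$.

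The choice of $N$ depends on the behavior of $\ell_m$. When $\ell_m\to 0$, take $N=N_m\to\infty$ with $N_m\ell_m\to 0$ (for instance $N_m=\lfloor\ell_m^{-1/2}\rfloor$); both error terms vanish and the equidistribution follows. If instead $\ell_m$ stays bounded below by some $\eta>0$ along a subsequence, then equidistribution of $\{k\alpha\}_{k=1}^{K_m}$ (which becomes $\eta$-dense as $K_m\to\infty$) forces $X_m=[0,1)$ for all large $m$ in that subsequence, making the claim trivial. The same dichotomy handles $Y_m$.

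To finish, for all sufficiently large $m$ one has $\lambda(X_m\cap J)\geq \lambda(X_m)\lambda(J)/2\geq\delta\lambda(J)/2$, so
\[ \lambda(X_m\cap A\cap J)\geq \lambda(X_m\cap J)-\lambda(J\setminus A)\geq (\delta/4)\lambda(J)>0, \]
and analogously $\lambda(Y_m\cap A\cap J)>0$. Any $x^*\in X_m\cap A\cap J$ and $y^*\in Y_m\cap A\cap J$ satisfy $|x^*-y^*|\leq \lambda(J)<\delta$. The main obstacle is the case distinction on $\ell_m$: the averaging argument degenerates when the translates of $U_m$ fail to shrink, but precisely in that regime $X_m$ already fills the circle and the claim is immediate.
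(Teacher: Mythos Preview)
Your argument is essentially correct and proceeds along a different route than the paper. The paper approximates $A$ by a finite union of intervals $B$ with $\lambda(A\Delta B)$ small, uses equidistribution only to guarantee that $\lambda(I_i\cap X_m)$ and $\lambda(I_i\cap Y_m)$ exceed $0.99\,\lambda(I_i)\delta$ in each interval $I_i$ of $B$, and then runs a combinatorial pairing argument: each connected component of $X_m$ inside $I_i$ is matched with its nearest $Y_m$-neighbor (within $\delta$ by the gap bound on $\{R_\alpha^k 0\}$), and if no good pair $(x^*,y^*)$ existed one would force $\lambda(B\setminus A)>\epsilon$, a contradiction. Your approach instead localizes at a single Lebesgue density interval $J\subset[0,1)$ and proves the sharper quantitative statement $\lambda(X_m\cap J)\to\lambda(X_m)\lambda(J)$ via an averaging/discrepancy estimate exploiting the near translation-invariance of $X_m$. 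This is more direct and avoids the pairing step entirely; the paper's argument is more hands-on but uses equidistribution only qualitatively.

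One point deserves tightening: your dichotomy ``either $\ell_m\to 0$, or $\ell_m\ge\eta$ along a subsequence'' does not cover all large $m$ when $\liminf\ell_m=0<\limsup\ell_m$. The fix is to choose the threshold first: given the target error $\epsilon$, pick $N$ with $D_N<\epsilon/2$, set $\eta=\epsilon/(2N)$, and then for \emph{each} $m$ argue according to whether $\ell_m<\eta$ (the averaging bound with this fixed $N$ gives error $<\epsilon$) or $\ell_m\ge\eta$ (once $K_m$ exceeds a threshold depending only on $\eta$, the translates cover $[0,1)$). Since $K_m\to\infty$, both cases are handled for all sufficiently large $m$.
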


\begin{proof}
Choose a positive $\epsilon$ satisfying $\epsilon< \frac{(.99)\lambda(A)\delta}{2+(.99)\delta}.$ This choice guarantees that $(\frac{1}{2})(.99)(\lambda(A)-\epsilon)\delta > \epsilon$. Since $A$ has finite measure, there is a finite, disjoint union of open intervals $B = \bigsqcup_{i=1}^n I_i$ such that $\lambda(A \Delta B) < \epsilon$. By the equidistribution of points under $R_\alpha$ and the fact that $K_m\to \infty$, we may pick $M>0$ so large that for all $m>M$, 
\[ \lambda(I_i \cap X_m) > .99 \lambda(I_i) \delta\]
\[ \lambda(I_i \cap Y_m) > .99 \lambda(I_i) \delta\]
for all $i=1, \ldots , n$, using our lower bound on the measures of $X_m$ and $Y_m$. Further pick $M$ so large that for $m>M$, the maximum separation between two adjacent points in $\{R_\alpha^k 0\}_{k=1}^{K_m}$ is $<\delta$.

Consider the intervals forming $X_m$ and $Y_m$ which are contained in $I_i$. For each interval $U$ which is a connected component of $X_m$, let $V_U$ be its nearest neighbor to the right among the connected components of $Y_m$. (Such a neighbor exists for all but possibly the last such $U$ contained in $I_i$. We may choose $M$ so large that the number of $X_m$ intervals in $I_i$ is very large, making this exceptional subinterval's contribution to the argument below negligible.) Note that $\max_{x\in U, y\in V_U}|x-y|<\delta$.  If a pair $(x^*, y^*)$ as desired does not exist, then for each pair $(U, V_U)$, at least one of $U, V_U$ contains no points in $A$. Therefore, $\lambda(I_i \setminus A) > \frac{1}{2}(.99)\lambda(I_i)\delta$. Thus,
\begin{align}
	\lambda(B\setminus A) &>  \sum_{i=1}^n \frac{1}{2}(.99) \lambda(I_i)\delta \nonumber \\
					& = \frac{1}{2}(.99)\lambda(B)\delta \nonumber \\
					& > \frac{1}{2}(.99)(\lambda(A)-\epsilon)\delta > \epsilon \nonumber
\end{align}
by our choice of $\epsilon$. But this contradicts our choice of $B$, proving the lemma.
\end{proof}

To simplify notation a bit, we set for all integers $m$:
\[ f_m(x) := \frac{\sum_{j=1}^{q_m-1}\chi_{V_j}(x)}{\sum_{j=1}^{q_m-1} \lambda(V_j)}. \]
Where it exists, we set
\[ f(x) := \lim_{N\to\infty} \frac{\sum_{j=1}^N \chi_{V_j}(x)}{\sum_{j=1}^N \lambda(V_j)}.\]
Note that, where it exists, $\lim_{m\to \infty} f_m(x) = f(x)$ and $f$ is measurable. In addition, by Fatou's Lemma, $f$ will be integrable over the set where it is defined, since $\int_{[0,1)}f_md\lambda =1$ for all $m$. Therefore we can assume $f$ takes only finite values.

We are now ready to prove Theorem B.

\begin{proof}[Proof of Theorem B]
Fix $C>0$ and apply Proposition \ref{big time} to find a full measure set of $\alpha$ satisfying equation \ref{eqn:a condition} for infinitely many $m$. Fix any such $m$.

For all $b\in [2,a_{m}]$, let
\[ W_b = \bigcup_{j \in J^{m-1}_b} V_j.\]
Note that by Lemma \ref{disjoint}, this is a disjoint union, and using Lemma \ref{atom_description}, 
\[ \lambda(W_b) = q_{m-1}\big[ \langle\langle q_{m-2}\alpha \rangle\rangle - (b-2)\langle\langle q_{m-1}\alpha \rangle\rangle\big].\]
In addition, if $x\in W_b$, then it will belong to exactly one $V_j$ with $j\in J^{m-1}_{b'}$ for all $b'\leq b$, and because $V_{j+q_k}\subset V_j$ for $q_k\leq j\leq q_{k+1}-q_k$,
\[\sum_{j=q_{m-1}} ^{q_m-1} \chi_{V_j}(x) \geq b \  \mbox{ for all } \ x\in W_b.\]

Choose any $\rho\in(1/8,1/4)$ in such a way that $\rho a_m\in \mathbb{N}$ (possible since $a_m$ is very large), and let $X_m=W_{\rho a_m}$. We then estimate the measure of $X_m$ below using standard results on the convergents:
\begin{align}
	\lambda(X_m) &= q_{m-1}\left[ \langle\langle q_{m-2}\alpha \rangle\rangle - (\rho a_m-1)\langle\langle q_{m-1}\alpha \rangle\rangle \right]  \nonumber \\
			& \geq q_{m-1} \left[ \frac{1}{q_{m-1}+q_{m-2}} - \rho a_m \frac{1}{q_m} \right] \nonumber \\
			& \geq \frac{1}{2} - \rho \frac{a_m q_{m-1}}{q_m} \nonumber \\
			& \geq \frac{1}{2} - \rho \nonumber \\
			& \geq \frac{1}{4}. \nonumber
\end{align}

Second, choose $\sigma\in (1/16,1/8)$ so that $\sigma a_m \in \mathbb{N}$ and is $\geq 2$. Let $Y_m = W_1 \setminus W_{\sigma a_m}$. Then, as any $y\in Y_m$ will not belong to $V_j$ when $j\in J^m_b$ for $b\geq \sigma a_m$, 
\[ \sum_{j=q_{m-1}}^{q_m-1} \chi_{V_j}(y) \leq \sigma a_m -1\ \mbox{ for all } \ y \in Y. \]
Using Corollary \ref{cor:lambda bound}, 
\[ \sum_{j=1}^{q_m-1} \chi_{V_j}(y) \leq \sigma a_m -1 + \sum_{i=1}^{m-1}a_i \mbox{ for all } y\in Y.\]
We can also estimate the measure of this set (recalling that $a_m$ is very large):
\begin{align}
	\lambda(Y_m) &= q_{m-1}\big[ (\sigma a_m -1)\langle\langle q_{m-1}\alpha \rangle\rangle\big] \nonumber \\
			& \geq q_{m-1} (\sigma a_m-1) \frac{1}{q_m+q_{m-1}} \nonumber \\
			& \geq \frac{1}{2} \sigma \frac{a_m q_{m-1}}{2 q_m} \nonumber \\
			& = \frac{1}{4} \sigma \frac{a_m q_{m-1}}{a_mq_{m-1}+q_{m-2}} \nonumber \\
			& \geq \frac{1}{4} \sigma \frac{a_m q_{m-1}}{(a_m+1)q_{m-1}} \nonumber \\
			& \geq \frac{1}{4} \sigma \frac{1}{2} \  \geq \frac{1}{128}.\nonumber 
\end{align}
Estimates here are certainly not precise; the key point is that $X_m$ and $Y_m$ have a positive lower bound on their measures which is independent of $m$. Let $\delta =\frac{1}{128}$.

Using the results above, for all $x\in X_m$ and $y\in Y_m$,
\begin{align}
	\sum_{j=1}^{q_m-1} \chi_{V_j}(x) - \sum_{j=1}^{q_m-1} \chi_{V_j}(y) & > \rho a_m -\sigma a_m +1 - \sum_{i=1}^{m-1} a_i \nonumber \\
			& > (\rho -\sigma - 1/C)a_m. \nonumber
\end{align}
Finally, using Corollary \ref{cor:lambda bound},
\begin{align}
	\big|f_m(x)-f_m(y)\big| = \frac{\Big| \sum_{j=1}^{q_m-1} \chi_{V_j}(x) - \sum_{j=1}^{q_m-1} \chi_{V_j}(y)\Big|}{\sum_{j=1}^{q_m-1} \lambda(V_j)} &\geq \frac{(\rho-\sigma-1/C)a_m}{\sum_{i=1}^{m}a_i} \nonumber \\
			& \geq \frac{(\rho-\sigma-1/C)a_m}{(1+1/C)a_m} \nonumber \\
			& = \frac{(\rho-\sigma-1/C)}{(1+1/C)}. \nonumber
\end{align}
By choosing $C$ sufficiently large, and since $\rho>\sigma$, we have $|f_m(x)-f_m(y)|\geq D>0$ for all $m$ such that equation \ref{eqn:a condition} holds and all $x\in X_m$, $y\in Y_m$.

Let $Z=\{x: f(x) \mbox{ exists}\}$. Towards a contradiction, assume $\lambda(Z)>0$. Fix $\epsilon< \frac{D}{3}$ and $<\frac{\lambda(Z)}{2}$.

Since $f$ is measurable, by Luzin's Theorem there is a compact set $G\subset Z$ with $\lambda(G)>\lambda(Z)-\epsilon$ over which $f$ is (uniformly) continuous. Let $\delta>0$ be such that $|x-y|<\delta$ and $x,y\in G$ imply $|f(x) - f(y)|<\epsilon$.

Let
\[Z_N =\big\{x \in Z: \mbox{ for all } n\geq N, \frac{\sum_{j=1}^n\chi_{V_j}(x)}{\sum_{j=1}^n\lambda(V_j)} \mbox{ is within } \epsilon \mbox{ of } f(x)\big\}.\]
Under our assumption $\lambda(Z_N) \to \lambda(Z)$ as $N\to \infty$. Pick $N_0$ so large that $\lambda(Z_{N_0})>\lambda(Z)-\epsilon$ and, therefore, $\lambda(G\cap Z_{N_0})>\lambda(Z)-2\epsilon >0$ by the choice of $\epsilon$.

Let $m$ be chosen so large that the following hold:

\begin{itemize}
	\item $q_m > N_0$,
	\item $a_m$ satisfies condition \ref{eqn:a condition}, and
	\item $\{X_m\}_{m\in\mathbb{N}}$ and $\{Y_m\}_{m\in\mathbb{N}}$ satisfy Lemma \ref{lem:equidist} with $A=G \cap Z_{N_0}$.
\end{itemize}
Then we may take $x^*\in X_m\cap G\cap Z_{N_0}$ and $y^*\in Y_m\cap G\cap Z_{N_0}$ with $|x^*-y^*|<\delta$. As $x^*, y^*\in Z_{N_0}$ and $q_m>N_0$, $|f_m(x^*)-f(x^*)|<\epsilon$ and $|f_m(y^*)-f(y^*)|<\epsilon$. As both points are in $G$ and $|x^*-y^*|<\delta$, $|f(x^*)-f(y^*)|<\epsilon$. We conclude that $|f_m(x^*) - f_m(y^*)| < 3\epsilon < D$. But this contradicts our result above on the minimum difference between the values of $f_m$ at points in $X_m$ and $Y_m$ when $a_m$ satisfies \ref{eqn:a condition}. Therefore there is a set of full measure where the $f_m$ do not converge, completing the proof.
\end{proof}

%

\section{Proof of Theorem A}\label{sec:main proof}

Towards Theorem A, we claim the following set of inequalities:

There exists a positive constant $C_1$ such that for almost every $\alpha$ and $x\in [0,1)$,
\begin{equation}\label{pointwise_bounds}
	C_1n(\log n)^3 > \sum_{i=1}^n a_i(\alpha) \geq \sum_{j=1}^{q_n-1}\chi_{V_j}(x) > \frac{1}{4}(n-2).
\end{equation}
The middle inequality follows from almost the same proof as Corollary \ref{cor:lambda bound}. We prove the other two inequalities in the following sequence of Lemmas. Lemma \ref{alpha_bound} specifies the full measure set of $\alpha$ for which we prove Theorem A.

\begin{lem}\label{alpha_bound}
There exists a positive constant $C_1$ such that for almost every $\alpha$, $C_1n(\log n)^3 > \sum_{i=1}^n a_i(\alpha)$ for all sufficiently large $n$.
\end{lem}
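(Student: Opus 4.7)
\medskip

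\noindent\textbf{Proof plan.} The strategy is a truncation argument combined with Borel--Cantelli, carried out along a dyadic subsequence. We cannot apply Markov's inequality directly to $\sum a_i$ because each $a_i$ has infinite expectation under Lebesgue measure (this is exactly the phenomenon driving Proposition \ref{big time}). However, Lemma \ref{bounds} gives, for any values $b_1,\ldots,b_{i-1}$ and any $b$,
\[
\lambda\bigl(a_i(\alpha)=b \mid a_1=b_1,\ldots,a_{i-1}=b_{i-1}\bigr) < \frac{2}{b^2},
\]
so for any threshold $T$ the truncated variable $\min(a_i,T)$ has conditional expectation at most $\sum_{b=1}^{T} b\cdot\tfrac{2}{b^2} \leq 2(1+\log T)$, uniformly in the past. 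Also, marginalizing Corollary \ref{cor:bounds} gives $\lambda(a_i\geq T) \leq 4/T$.

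Set $n_k = 2^k$ and choose the truncation level $T_k = n_{k+1}\,k^2$. Consider the two bad events
\[
E_k := \Bigl\{\max_{1\leq i\leq n_{k+1}} a_i(\alpha) \geq T_k\Bigr\},
\qquad
F_k := \Bigl\{\sum_{i=1}^{n_{k+1}} \min(a_i(\alpha),T_k) \geq C_1 n_{k+1} k^3\Bigr\}.
\]
A union bound gives $\lambda(E_k) \leq n_{k+1}\cdot 4/T_k = 4/k^2$. The tower property and the truncation estimate above give $\mathbb{E}\bigl[\sum_{i=1}^{n_{k+1}} \min(a_i,T_k)\bigr] \leq 2(1+\log T_k)\,n_{k+1} = O(k\,n_{k+1})$, so Markov's inequality yields $\lambda(F_k) = O(1/k^2)$. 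Both $\sum_k \lambda(E_k)$ and $\sum_k \lambda(F_k)$ are finite, so Borel--Cantelli implies that for almost every $\alpha$ and all sufficiently large $k$, neither $E_k$ nor $F_k$ occurs. On the complement of $E_k$, $\min(a_i,T_k)=a_i$ for every $i\leq n_{k+1}$, so
\[
\sum_{i=1}^{n_{k+1}} a_i(\alpha) \leq C_1\, n_{k+1}\, k^3.
\]

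To pass from the subsequence $n_k$ to all $n$, use monotonicity of partial sums: for $n\in[n_k,n_{k+1})$,
\[
\sum_{i=1}^{n} a_i(\alpha) \leq \sum_{i=1}^{n_{k+1}} a_i(\alpha) \leq C_1\, n_{k+1}\, k^3 \leq 2 C_1\, n\, k^3 \leq C_1'\, n\, (\log n)^3,
\]
after absorbing constants and using $k = \lfloor \log_2 n\rfloor$. This gives the desired bound for all sufficiently large $n$ and almost every $\alpha$, at the cost of replacing $C_1$ by a larger absolute constant.

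\medskip

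\noindent\textbf{Main obstacle.} The only nontrivial point is the choice of truncation level: $T_k$ must be large enough that no single $a_i$ with $i\leq n_{k+1}$ exceeds it (with summable failure probability over $k$), yet small enough that $\log T_k$ contributes only a polylogarithmic factor to the expectation. The choice $T_k = n_{k+1}k^2$ balances these competing requirements and produces the $(\log n)^3$ factor in the statement. The quasi-independence of the $a_i$ under the Gauss map is handled automatically by the conditional form of Lemma \ref{bounds}, so no extra mixing input is needed.
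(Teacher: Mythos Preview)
Your proposal is correct and follows essentially the same approach as the paper: truncate the $a_i$ at a level growing with $n$, bound the truncated expectation by $O(n\log n)$ via Lemma~\ref{bounds}, apply Markov's inequality along a geometric subsequence so that Borel--Cantelli applies, and then use monotonicity to pass to all $n$. The paper truncates at $n^2$ (its set $A_n$), uses the threshold $10n(\log n)^{2.1}$ with subsequence $10^k$, while you truncate at $n_{k+1}k^2$ with subsequence $2^k$; these are purely parameter choices and the arguments are structurally identical. One small imprecision: your bound on $\mathbb{E}[\min(a_i,T)]$ should also account for the tail contribution $T\cdot\lambda(a_i>T)=O(1)$, but this does not affect the $O(k\,n_{k+1})$ conclusion.
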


\begin{rem}
Note that how large $n$ must be for the given bound to hold does depend on $n$.
\end{rem}

\begin{proof}
As in the proof of Lemma \ref{lem:Wn}, set $A_n=\{\alpha: a_i(\alpha) < n^2 \mbox{ for all }  i \leq n\}$. As before, $\int_{A_n}\underset{i=1}{\overset{n}{\sum}}a_i(\alpha)d\lambda(\alpha)\leq 5n \log n$ (for $n>7$).
Note also that $\lambda$-a.e. $\alpha$ belongs to $A_n$ for all but finitely many $n$.
It follows from Markov's inequality that 
\begin{align}
	\lambda\left(\{\alpha\in A_n: \underset{i=1}{\overset{n}{\sum}}a_i(\alpha) > 10 n (\log n)^{2.1}\}\right) &\leq \frac{1}{10n(\log n)^{2.1}}\int_{A_n} \sum_{i=1}^n a_i(\alpha) d\lambda \nonumber \\
		& \leq \frac{1}{2}\Big( \frac{1}{\log n}\Big)^{1.1}. \nonumber
\end{align}

Since almost every $\alpha$ belongs to $A_n$ for all but finitely many $n$, almost every $\alpha$ belongs to $A_{10^k}$ for all but finitely many $k$. Then
\[ \lambda\left(\left\{\alpha\in A_{10^k}: \underset{i=1}{\overset{10^k}{\sum}}a_i(\alpha) > 10^{k+1} (\log 10^k)^{2.1}\right\}\right) \leq \left(\frac{1}{\log 10^k}\right)^{1.1}.\]
These measures form a summable sequence, so for a.e. $\alpha$, 
\[ \sum_{i=1}^{10^k} a_i(\alpha) \leq 10^{k+1}\left(\log 10^k\right)^{2.1} \mbox{ for all but finitely many } k.\]
This implies the Lemma for the subsequence $n=10^k$ because for large enough $k$, we have $10^{k}(\log 10^{k})^3 > 10^{k+1} (\log 10^{k+1})^{2.1}.$

In general, given $\alpha$, suppose that $\sum_{i=1}^{10^k}a_i(\alpha) < 10^k (\log 10^k)^3$ for all $k\geq k^*$. Then for any $n\geq 10^{k^*}$, if $10^k\leq n\leq 10^{k+1}$, then
\[ \sum_{i=1}^n a_i(\alpha) \leq 10^{k+1}(\log 10^{k+1})^3 \leq 10 n (\log 10 n)^3\]
and the result holds after an appropriate choice of $C_1$.
\end{proof}

We will give a lower bound on $\sum_{j=1}^{q_n}\chi_{V_j}(x)$ by bounding below the sum over the $J_2^i$.  As we noted above, $\{J_2^i\}_{i\in\mathbb{N}}$ is a disjoint set of intervals. Let 
\[h_i(x) = \sum_{j\in J_2^i}\chi_{V_j}(x).\]

\begin{lem}\label{one_half}
For all $i$,
\[\int_{[0,1)} h_i(x)d\lambda >1/2.\]
\end{lem}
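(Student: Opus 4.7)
My plan is to compute the integral directly by Tonelli:
\[ \int_{[0,1)} h_i(x)\,d\lambda = \sum_{j\in J_2^i} \lambda(V_j), \]
and then use Lemma \ref{atom_description} to identify each $\lambda(V_j)$ in closed form, together with a standard lower bound on the best approximation $\langle\langle q_{i-1}\alpha\rangle\rangle$. Note that $|J_2^i| = q_i$ regardless of whether $a_{i+1}\geq 2$ or $a_{i+1}=1$, so the summation has $q_i$ terms.

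For each $j\in J_2^i=[q_{i-1}+q_i,\,q_{i-1}+2q_i)$, I will compute $r_j,s_j,t_j$. In the case $a_{i+1}\geq 2$, one finds $r_j=q_i$, $s_j=q_{i-1}$, and $t_j=1$, so Lemma \ref{atom_description} yields $\lambda(V_j) = \langle\langle q_{i-1}\alpha\rangle\rangle$ (the $\langle\langle q_i\alpha\rangle\rangle$ terms cancel). This reproduces the $b=2$ case of the formula for $\lambda(W_b)$ used in the previous section.

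In the case $a_{i+1}=1$, the interval $J_2^i$ coincides with $J_1^{i+1}$, and instead $r_j=q_{i+1}$, $s_j=q_i$, $t_j=0$, giving $\lambda(V_j)=\langle\langle q_{i+1}\alpha\rangle\rangle+\langle\langle q_i\alpha\rangle\rangle$. The standard continued-fraction recursion $\langle\langle q_{i-1}\alpha\rangle\rangle = a_{i+1}\langle\langle q_i\alpha\rangle\rangle + \langle\langle q_{i+1}\alpha\rangle\rangle$ specialized to $a_{i+1}=1$ then collapses this again to $\langle\langle q_{i-1}\alpha\rangle\rangle$. Thus in both cases
\[ \int_{[0,1)} h_i\,d\lambda = q_i\,\langle\langle q_{i-1}\alpha\rangle\rangle. \]

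To conclude, I will apply the classical bound $\langle\langle q_{i-1}\alpha\rangle\rangle > 1/(q_i+q_{i-1})$, combined with the monotonicity $q_{i-1}\leq q_i$, to get
\[ q_i\,\langle\langle q_{i-1}\alpha\rangle\rangle \;>\; \frac{q_i}{q_i+q_{i-1}} \;\geq\; \frac{1}{2}. \]
The only real obstacle is the bookkeeping between the $a_{i+1}\geq 2$ and $a_{i+1}=1$ cases, since the definitions of $r_j,s_j,t_j$ depend on whether $q_{i+1}\leq j$; but the recursion on $\langle\langle q_k\alpha\rangle\rangle$ makes the two cases produce the same closed-form answer, so no honest difficulty arises.
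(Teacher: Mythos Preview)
Your proof is correct and follows essentially the same route as the paper: compute $\int h_i = \sum_{j\in J_2^i}\lambda(V_j) = q_i\,\langle\langle q_{i-1}\alpha\rangle\rangle$ via Lemma~\ref{atom_description}, then apply the Khinchin bound $\langle\langle q_{i-1}\alpha\rangle\rangle > 1/(q_i+q_{i-1})$. Your explicit split into the cases $a_{i+1}\ge 2$ and $a_{i+1}=1$ (with the recursion $\langle\langle q_{i-1}\alpha\rangle\rangle = a_{i+1}\langle\langle q_i\alpha\rangle\rangle + \langle\langle q_{i+1}\alpha\rangle\rangle$ collapsing the latter) is a careful justification of a step the paper states in one line.
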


\begin{proof}
As per Lemma \ref{disjoint}, over $j\in J_2^i$, the $V_j$ are disjoint, so $h_i(x) \in \{0, 1\}$.  The length of the interval $J_2^i$ is $q_i$, and for $j\in J^i_2$,
\[\lambda(V_j) = \langle\langle q_{i-1}\alpha \rangle\rangle,\]
using the description of $R_\alpha(U_j)$ provided by Lemma \ref{atom_description}.  By Theorem 13 in \cite{khinchin}, $\langle\langle q_{i-1}\alpha \rangle\rangle > \frac{1}{q_{i-1}+q_i}$.  We may then bound the integral from below by 

\[\int_{[0,1)} h_i(x)d\lambda > \frac{q_i}{q_i+q_{i-1}} > \frac{q_i}{2q_i} = \frac{1}{2}.\]
\end{proof}

The following sequence of results prove that the random variables $h_i(x)$ are (approximately) independent.

\begin{lem}
Let $[c,d) \subset [0,1)$.  Let $f_{[c,d)}(i,b) = \#\{[c,d) \cap \cup_{l \in J^i_b} R_\alpha^{-l}(0)\}$.  Then 
\[\lambda\left([c,d)\right)\left|J^i_b\right|-2 \leq f_{[c,d)}(i,b) \leq \lambda\left([c,d)\right)\left|J^i_b\right| +2.\]
\end{lem}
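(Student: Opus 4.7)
The strategy is to reduce the count to a standard discrepancy estimate for the rotation $R_\alpha$ over a window whose length is a convergent denominator of $\alpha$.

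I would first translate: writing $l_0 := \min J^i_b$ and $N := |J^i_b|$, one has
\[
\bigcup_{l \in J^i_b} R_\alpha^{-l}(0) \;=\; R_\alpha^{-l_0}\Bigl(\{R_\alpha^{-k}(0) : 0 \leq k < N\}\Bigr),
\]
so counting this set inside $[c,d)$ is equivalent to counting the initial-segment orbit $\{R_\alpha^{-k}(0) : 0 \leq k < N\}$ inside a translate of $[c,d)$, an interval of the same length $d-c$. Inspecting the definition of $\mathcal{J}$, we have $N = q_{i-1}$ when $b=1$ and $N = q_i$ when $b \geq 2$; in either case $N$ is a denominator of a convergent. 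For such $N = q_k$, the three-distance theorem partitions $S^1$ into $N$ arcs of exactly two lengths, $L_s = \langle\langle q_{k-1}\alpha\rangle\rangle$ appearing $q_k - q_{k-1}$ times and $L_\ell = \langle\langle q_{k-1}\alpha\rangle\rangle + \langle\langle q_k\alpha\rangle\rangle$ appearing $q_{k-1}$ times. Using the classical identity $q_k\langle\langle q_{k-1}\alpha\rangle\rangle + q_{k-1}\langle\langle q_k\alpha\rangle\rangle = 1$, one checks $NL_s \in (0,1)$ and $NL_\ell \in (1,2)$.

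Setting $F(x) := \#\{0 \leq k < N : R_\alpha^{-k}(0) \in [0,x)\}$, the function $F(x) - Nx$ is piecewise linear with slope $-N$ between orbit points and jumps by $+1$ at each orbit point, so its net change across a short arc is $1 - NL_s \in (0,1)$ and across a long arc is $1 - NL_\ell \in (-1,0)$. The Sturmian-like arrangement of short and long arcs forced by the three-distance theorem is then used to conclude $\sup_x |F(x) - Nx| \leq 1$, which gives $|F(d') - F(c') - N(d-c)| \leq 2$ for any subinterval $[c',d')$ of length $d-c$. Un-translating recovers the lemma.

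The main obstacle is this final oscillation bound: a crude estimate using only the values of $L_s, L_\ell$ would yield a weaker constant, and one must exploit the specific combinatorial pattern of short and long arcs to pin down $\sup|F - Nx| \leq 1$. The cleanest shortcut is to invoke the Denjoy--Koksma inequality applied to $f = \chi_{[c,d)}$ (total variation $2$) at the convergent time $N = q_k$, which delivers the $\pm 2$ bound in one step and bypasses the combinatorial analysis entirely.
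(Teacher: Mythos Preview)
Your proposal is correct, with the Denjoy--Koksma shortcut doing the real work; the three-distance sketch is underspecified (you correctly flag this yourself), but the Denjoy--Koksma argument is complete once one notes that $\|q_k(1-\alpha)\| = \|q_k\alpha\|$, so $q_k$ is also a best-approximation denominator for the backward rotation $R_\alpha^{-1}=R_{1-\alpha}$, and the inequality applies to the translated interval with $\mathrm{Var}(\chi_{[c',d')})=2$.

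The paper takes a different and more hands-on route. After the same translation step (their $a-1$ is your $l_0-1$), it invokes a result of Kesten stating that each of the $q_m$ equal intervals $(j/q_m,(j+1)/q_m)$, $j=0,\ldots,q_m-1$, contains exactly one of the points $R_\alpha^{-l}(0)$ for $1\leq l\leq q_m$. Translating these intervals by $R_\alpha^{-(a-1)}$ gives a partition of the circle into $q_m$ arcs of equal length $1/q_m$, each containing exactly one orbit point from $\{R_\alpha^{-l}(0):l\in J^i_b\}$. Then one simply counts: at least $\lambda([c,d))q_m-2$ of these arcs are contained in $[c,d)$, and at most $\lambda([c,d))q_m+2$ of them meet $[c,d)$, yielding the bound. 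This is more elementary in that it avoids any variation/discrepancy machinery, relying instead on one sharp combinatorial fact about orbits at convergent times. Your Denjoy--Koksma route is shorter to state and invokes a single standard inequality; the paper's route is self-contained modulo Kesten's theorem and makes the mechanism (one point per $1/q_m$-cell) visually explicit.
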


\begin{proof}
By Theorem 1 in \cite{kesten}, each interval $(\frac{j}{q_m}, \frac{j+1}{q_m})$ for $j=0,1, \ldots, q_m-1$ contains exactly one point of $R_\alpha^{-l}(0)$ with $1\leq l \leq q_m$.  Recall that $|J_b^i|=q_m$ where $m=i-1$ if $b=1$ and $m=i$ if $b>1$. Therefore, if $a=\min J^i_b$, each $I_j:=R_\alpha^{-(a-1)}(\frac{j}{q_m},\frac{j+1}{q_m})$ contains exactly one point of $R^{-l}(0)$ for $l\in J^i_b$.

  At least $\lambda([c,d))|J_b^i|-2$ of the $I_j$ above are completely contained in $[c,d)$, and at most $\lambda([c,d))|J_b^i|+2$ of them intersect $[c,d)$. The result then follows.
\end{proof}

\begin{prop}\label{prop:independent}
Fix $k$. For all $i$ such that $q_i>k$ and any $1\leq b \leq a_{i+1}$,
\begin{align}
	\left(\frac{\lambda(V_k)|J^i_b|-3}{\lambda(V_k)|J^i_b|}\right) \lambda(V_k) &\lambda\left(\cup_{l\in J^i_b}V_l\right) \nonumber \\
	& \leq  \lambda\left(V_k \cap \bigcup_{l\in J^i_b}V_l\right) \nonumber \\
	& \leq \left(\frac{\lambda(V_k)|J^i_b|+3}{\lambda(V_k)|J^i_b|}\right) \lambda(V_k)\lambda\left(\cup_{l\in J^i_b}V_l\right).\nonumber
\end{align}
\end{prop}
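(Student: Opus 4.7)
The plan is to reformulate $\lambda(V_k \cap E)$, where $E := \bigcup_{l\in J^i_b} V_l$, as an integral over the common target $U$ rather than over $V_k$ itself, so that the preceding counting lemma yields an error that scales like $\lambda(U) = \lambda(E)/|J^i_b|$. By Lemma~\ref{atom_description} the atom $U_l$ equals a single interval $U$ for every $l \in J^i_b$, so $V_l = R_\alpha^{-l}(U)$; Lemma~\ref{disjoint} makes these pairwise disjoint, and hence $\lambda(E) = |J^i_b|\lambda(U)$. Applying the measure-preserving rotation $R_\alpha^l$ to each pairwise intersection,
\[
\lambda(V_k \cap V_l) \;=\; \lambda\bigl(R_\alpha^{l-k}(U_k) \cap U\bigr),
\]
and summing over $l$, Fubini together with disjointness gives
\[
\lambda(V_k \cap E) \;=\; \int_U M(z)\,dz, \qquad M(z) := \#\bigl\{l \in J^i_b : z \in R_\alpha^{l-k}(U_k)\bigr\}.
\]

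The next step reduces $M(z)$ to the counting question handled in the preceding lemma. Rewriting $z \in R_\alpha^{l-k}(U_k)$ as $R_\alpha^{-l'}(0) \in U_k - z$ with $l' = l - k$,
\[
M(z) \;=\; \#\bigl\{l' \in J^i_b - k : R_\alpha^{-l'}(0) \in U_k - z\bigr\}.
\]
Here $J^i_b - k$ is a block of $|J^i_b|$ consecutive integers, and $U_k - z$ is an arc of $\mathbb{R}/\mathbb{Z}$ of length $\lambda(V_k)$. The proof of the preceding lemma only invokes Kesten's Theorem~1, which is invariant under shifting the block of indices (the partition of the circle into $q_m$ equal arcs, each containing one orbit point, remains valid after a rotation by $-a\alpha$). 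Applying that same argument to $J^i_b - k$ in place of $J^i_b$ gives
\[
\lambda(V_k)|J^i_b| - 2 \;\le\; M(z) \;\le\; \lambda(V_k)|J^i_b| + 2.
\]

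Integrating this over $U$ and using $\lambda(V_k)\lambda(E) = \lambda(V_k)|J^i_b|\lambda(U)$ yields
\[
\bigl|\lambda(V_k \cap E) - \lambda(V_k)\lambda(E)\bigr| \;\le\; 2\lambda(U) \;=\; \frac{2}{\lambda(V_k)|J^i_b|}\,\lambda(V_k)\lambda(E),
\]
which is strictly stronger than the claimed multiplicative bound with constant $3$. The only real delicacy is the choice of variable of integration: a direct Fubini over $V_k$ would produce an error of size $2\lambda(V_k)$ rather than $2\lambda(U)$, yielding the wrong dependence. Extending the preceding lemma from $J^i_b$ to the shifted block $J^i_b - k$ is routine bookkeeping because Kesten's partition depends only on the number of consecutive orbit points used, not on where the block starts.
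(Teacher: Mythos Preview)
Your proof is correct and rests on the same equidistribution input (the lemma immediately preceding the proposition, via Kesten's theorem), but you organize the computation differently from the paper. The paper counts how many left endpoints of the intervals $V_l$ (for $l \in J^i_b$) fall inside $V_k$: this is exactly the preceding lemma applied with the original index set $J^i_b$ and with the target interval $V_k$ shifted by the left endpoint of $U$. That count, multiplied by $\lambda(V_{l^*})$, gives $\lambda(V_k\cap E)$ up to a boundary correction of one extra unit (from $V_l$'s that straddle an endpoint of $V_k$), which is where the $\pm 3$ comes from. You instead transport each $V_k\cap V_l$ to $R_\alpha^{l-k}(U_k)\cap U$, integrate the covering count $M(z)$ over $U$, and apply the preceding lemma with the \emph{shifted} index block $J^i_b - k$. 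This is legitimate, as you note, since the Kesten partition argument there depends only on the length $q_m$ of the block, and $\min(J^i_b) - k \geq q_i - k > 0$ keeps the shifted block inside the positive integers. Your route sidesteps the boundary correction entirely and gives the sharper constant $2$; the paper's route avoids the need to shift the index block. Both are short and valid.
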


\begin{proof}
Fix $k$. Let $i$ be so large that $q_i>k$. By the previous lemma, the interval $V_k$ is hit by the left endpoints of the $V_l$ between $\lambda(V_k)|J^i_b|-2$ and $\lambda(V_k)|J^i_b|+2$ times.  As the sets $V_l$ are disjoint and of the same measure over $l\in J^i_b$, this easily yields
\[\left(\lambda(V_k)|J^i_b|-3\right) \lambda(V_{l^*}) \leq \lambda \left(V_k \cap \bigcup_{l\in J^i_b}V_l\right) \leq \left(\lambda(V_k)|J^i_b|+3\right) \lambda(V_{l^*}) \ \mbox{ for any } l^* \in J_b^i. \]
Furthermore, for any $l^*\in J^i_b$, $|J^i_b|\lambda(V_{l^*}) = \lambda(\cup_{l\in J^i_b}V_l)$. Translating to an inequality with multiplicative errors yields
\begin{align}
	\left(\frac{\lambda(V_k)|J^i_b|-3}{\lambda(V_k)|J^i_b|}\right) \lambda(V_k) &\lambda\left(\cup_{l\in J^i_b}V_l\right) \nonumber \\
	& \leq  \lambda\left(V_k \cap \bigcup_{l\in J^i_b}V_l\right) \nonumber \\
	& \leq \left(\frac{\lambda(V_k)|J^i_b|+3}{\lambda(V_k)|J^i_b|}\right) \lambda(V_k)\lambda\left(\cup_{l\in J^i_b}V_l\right).\nonumber
\end{align}

\end{proof}

Proposition \ref{prop:independent} asserts near independence of the events $V_k$ and $\cup_{l\in J^i_b} V_l$. Using it for all $k\in J^j_{b'}$ where $j<i$ (which guarantees $k<q_i$) we get the following corollary. It relates to calculating the correlation between a point being undetermined in the intervals $J^j_{b'}$ and $J^i_b$.

\begin{cor} \label{bound1}
For any $k\in J^i_{b'}$, and $J^i_{b'}, J^j_b$ disjoint, $j>i$,
\begin{align}
	\left(\frac{\lambda(V_k)|J^j_b|-3}{\lambda(V_k)|J^j_b|}\right) \lambda\left(\cup_{k\in J^i_{b'}}V_k\right) &\lambda\left(\cup_{l\in J^j_b}V_l\right) \nonumber \\
	& \leq  \lambda\left(\bigcup_{k \in J^i_{b'}}V_k \cap \bigcup_{l\in J^j_b}V_l\right) \nonumber \\
	& \leq \left(\frac{\lambda(V_k)|J^j_b|+3}{\lambda(V_k)|J^j_b|}\right) \lambda\left(\cup_{k\in J^i_{b'}}V_k\right)\lambda\left(\cup_{l\in J^j_b}V_l\right).\nonumber
\end{align}
\end{cor}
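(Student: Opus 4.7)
The plan is to derive Corollary \ref{bound1} by applying Proposition \ref{prop:independent} to each individual $k \in J^i_{b'}$ and then summing the resulting inequalities, exploiting the disjointness established in Lemma \ref{disjoint} and the fact that $\lambda(V_k)$ is constant as $k$ varies over a single block $J^i_{b'}$.

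First, I would check applicability of Proposition \ref{prop:independent}. Since $J^i_{b'} \subset [q_i, q_{i+1})$ and $j > i$ (so $j \geq i+1$), every $k \in J^i_{b'}$ satisfies $k < q_{i+1} \leq q_j$. Hence Proposition \ref{prop:independent} gives, for each such $k$,
\[
\left(\frac{\lambda(V_k)|J^j_b|-3}{\lambda(V_k)|J^j_b|}\right)\lambda(V_k)\,\lambda\bigl(\cup_{l\in J^j_b}V_l\bigr) \leq \lambda\bigl(V_k\cap \cup_{l\in J^j_b}V_l\bigr) \leq \left(\frac{\lambda(V_k)|J^j_b|+3}{\lambda(V_k)|J^j_b|}\right)\lambda(V_k)\,\lambda\bigl(\cup_{l\in J^j_b}V_l\bigr).
\]

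Second, I would record the key constancy fact: by the remark following Lemma \ref{atom_description}, for every $k\in J^i_{b'}$ one has $r_k=q_i$, $s_k=q_{i-1}$, and a common value of $t_k$ (namely $b'-1$ when $b'>1$, or $0$ when $b'=1$). Therefore the formula for $\lambda(V_k)$ in Lemma \ref{atom_description} shows $\lambda(V_k)$ is constant on $J^i_{b'}$, and in particular the multiplicative error factors in the display above are independent of $k$ within the block. Moreover, since by Lemma \ref{disjoint} the sets $\{V_k\}_{k\in J^i_{b'}}$ are pairwise disjoint, the identity
\[
\sum_{k\in J^i_{b'}} \lambda(V_k) \;=\; |J^i_{b'}|\cdot \lambda(V_{k_0}) \;=\; \lambda\bigl(\cup_{k\in J^i_{b'}} V_k\bigr)
\]
holds for any fixed $k_0\in J^i_{b'}$.

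Third, I would sum the pointwise inequalities over $k\in J^i_{b'}$. The disjointness from Lemma \ref{disjoint} gives
\[
\sum_{k\in J^i_{b'}} \lambda\bigl(V_k\cap \cup_{l\in J^j_b}V_l\bigr) \;=\; \lambda\Bigl(\cup_{k\in J^i_{b'}}V_k \,\cap\, \cup_{l\in J^j_b}V_l\Bigr),
\]
while on the outer sides the constant error factor pulls out and the remaining sum collapses via the previous identity to $\lambda(\cup_{k\in J^i_{b'}}V_k)\cdot \lambda(\cup_{l\in J^j_b}V_l)$. This yields exactly the claimed double inequality.

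I do not anticipate any serious obstacle: the entire argument is bookkeeping around Proposition \ref{prop:independent}. The only non-automatic ingredients are the verification that $\lambda(V_k)$ is truly constant over $J^i_{b'}$ (so that the error factor factors out of the sum) and the disjointness from Lemma \ref{disjoint} (so that the sum of measures equals the measure of the union); both are already in hand from earlier results.
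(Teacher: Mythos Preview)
Your proposal is correct and follows exactly the paper's own approach: the paper's proof consists of the single sentence ``This follows from summing Proposition \ref{prop:independent}'s inequalities over the disjoint sets $V_k$ for $k\in J^i_{b'}$,'' and you have simply unpacked that sentence, making explicit the applicability check $k<q_j$, the constancy of $\lambda(V_k)$ over $J^i_{b'}$, and the use of Lemma \ref{disjoint}.
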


\begin{proof}
This follows from summing Proposition \ref{prop:independent}'s inequalities over the disjoint sets $V_k$ for $k\in J^i_{b'}$.  (The desire to compute this sum explains our preference for the formulation in terms of multiplicative bounds above.)
\end{proof}

\begin{prop}\label{integral_bounds} 
For $j>i$
\[\left( 1 - \frac{3q_{i+1}}{q_j} \right) \int h_i d\lambda \int h_j d\lambda \leq \int h_i h_j d\lambda \leq \left( 1 + \frac{3q_{i+1}}{q_j} \right) \int h_i d\lambda \int h_j d\lambda.\]
\end{prop}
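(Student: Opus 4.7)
The plan is to recognize each $h_i$ as an indicator function and then read off the bounds from Corollary \ref{bound1}, after converting its relative error into the form $3q_{i+1}/q_j$.

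First, because the sets $\{V_l\}_{l\in J^i_2}$ are pairwise disjoint (Lemma \ref{disjoint}), the function $h_i=\sum_{l\in J^i_2}\chi_{V_l}$ is the indicator of the set $E_i:=\bigcup_{l\in J^i_2}V_l$. In particular $\int h_i\,d\lambda=\lambda(E_i)$ and $\int h_i h_j\,d\lambda=\lambda(E_i\cap E_j)$, so the desired inequalities are assertions about the multiplicative approximate independence of the events $E_i$ and $E_j$.

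Second, the intervals $\{J^i_2\}_{i\in\mathbb{N}}$ are pairwise disjoint (as noted in the definition of $\mathcal{J}$), so Corollary \ref{bound1}, applied with $b=b'=2$ to the pair $J^i_2,J^j_2$, gives, for any fixed $k\in J^i_2$,
\[
\left(1-\frac{3}{\lambda(V_k)\,|J^j_2|}\right)\lambda(E_i)\lambda(E_j)\;\le\;\lambda(E_i\cap E_j)\;\le\;\left(1+\frac{3}{\lambda(V_k)\,|J^j_2|}\right)\lambda(E_i)\lambda(E_j).
\]

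Third, I would show that the error factor satisfies $3/(\lambda(V_k)\,|J^j_2|)\le 3q_{i+1}/q_j$. Directly, $|J^j_2|=q_j$. For the other factor, Lemma \ref{atom_description} (which applies to indices in $J^i_2$ with $r_j=q_i$, $s_j=q_{i-1}$, $t_j=1$) gives $\lambda(V_k)=\langle\langle q_{i-1}\alpha\rangle\rangle$. The standard continued-fraction estimate $\langle\langle q_n\alpha\rangle\rangle>1/(q_n+q_{n+1})$, combined with $q_{i-1}+q_i\le q_{i+1}$ (which holds since $q_{i+1}=a_{i+1}q_i+q_{i-1}\ge q_i+q_{i-1}$), then yields $\lambda(V_k)>1/q_{i+1}$, whence $3/(\lambda(V_k)\,|J^j_2|)\le 3q_{i+1}/q_j$. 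Plugging this estimate into the previous display completes the proof.

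Main obstacle: there is no substantial difficulty here; the proposition is essentially a repackaging of Corollary \ref{bound1} using the sharp lower bound on $\lambda(V_k)$ for $k\in J^i_2$. The only point to watch is invoking the correct continued-fraction inequality to turn $1/\lambda(V_k)$ into $q_{i+1}$; everything else is bookkeeping.
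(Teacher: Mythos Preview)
Your proof is correct and takes essentially the same approach as the paper: identify $h_i$ with the indicator of $\bigcup_{l\in J^i_2}V_l$, apply Corollary~\ref{bound1}, and bound the relative error using $|J^j_2|=q_j$ together with $\lambda(V_k)=\langle\langle q_{i-1}\alpha\rangle\rangle>1/(q_{i-1}+q_i)\ge 1/q_{i+1}$. (The paper's printed chain $\lambda(V_l)>\langle\langle q_i\alpha\rangle\rangle>\tfrac{1}{q_{i-1}+q_i}$ appears to contain a typo in the middle term; your formulation with $\langle\langle q_{i-1}\alpha\rangle\rangle$ is the intended one and matches Lemma~\ref{one_half}.)
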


\begin{proof}
First, 
\[\int h_i(x)h_j(x) d\lambda = \int \left(\sum_{l\in J^i_2} \chi_{V_l}(x)  \right)  \left( \sum_{l\in J^j_2} \chi_{V_l}(x) \right)d\lambda.\]
As over $J^i_2$ and over $J^j_2$ the sets $V_l$ are disjoint, the integrand of the above has value 0 or 1 according to whether $x\in \left(\cup_{l \in J^i_2}V_l\right) \cap \left(\cup_{l \in J^j_2}V_l\right)$.  Thus,
\[\int h_i h_j d\lambda = \lambda\left( \bigcup_{l\in J^i_2} V_l \cap \bigcup_{l \in J^j_2}V_l \right).\]

By Corollary \ref{bound1}, for $l\in J_2^i$ we get
\begin{align}
	\left(\frac{\lambda(V_l)|J^j_2|-3}{\lambda(V_l)|J^j_2|}\right) \lambda\left(\cup_{l\in J^i_2}V_l\right) &\lambda\left(\cup_{l\in J^j_2}V_l\right) \nonumber \\
		& \leq  \lambda\left(\bigcup_{l \in J^i_2}V_l \cap \bigcup_{l\in J^j_2}V_l\right) \nonumber \\
		& \leq \left(\frac{\lambda(V_l)|J^j_2|+3}{\lambda(V_l)|J^j_2|}\right) \lambda\left(\cup_{l\in J^i_2}V_l\right)\lambda\left(\cup_{l\in J^j_2}V_l\right).\nonumber
\end{align}

To assess the value of the terms $\left(1\pm \frac{3}{\lambda(V_l)|J^j_2|}\right)$ consider an arbitrary $l\in J^i_2$. As $U_l=R_\alpha V_l$, using the description of $R_\alpha U_l$ given by Proposition \ref{atom_description} and \cite[Theorem 13]{khinchin}, $\lambda(V_l) > \langle\langle q_i\alpha \rangle \rangle > \frac{1}{q_{i-1}+q_i}\geq \frac{1}{q_{i+1}}.$    From its description, $|J^j_2|=q_j$.  Using these two bounds, $\frac{3}{\lambda(V_l)|J_2^j|}<\frac{3 q_{i+1}}{q_j}$.

Returning to our inequalities for $\int h_i h_j$, as the $V_l$ are disjoint over $J^j_2$ or $J^i_2$ we can translate back into integrals as so:
\begin{align}
	\left(1- \frac{3q_{i+1}}{q_j}\right) \int \sum_{l\in J^i_2}\chi_{V_l}(x)d\lambda & \int \sum_{l\in J^j_2}\chi_{V_l}(x)d\lambda \nonumber \\
	&\leq \int h_ih_j d\lambda  \leq \nonumber\\
	 &\left(1+ \frac{3q_{i+1}}{q_j}\right) \int \sum_{l\in J^i_2}\chi_{V_l}(x)d\lambda \int \sum_{l\in J^j_2}\chi_{V_l}(x)d\lambda. \nonumber
\end{align}
These are the desired bounds on $\int h_ih_j d\lambda$.
\end{proof}

The independence result we want is the following.

\begin{prop}\label{decay2}
There exist constants $C, b>0$ such that 
\[\left| \int_{[0,1)} h_i(x)h_j(x)d\lambda - \int_{[0,1)} h_i(x)d\lambda\int_{[0,1)}h_j(x)d\lambda \right| < Ce^{-b|i-j|}.\]
\end{prop}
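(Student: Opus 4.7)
The plan is to deduce Proposition \ref{decay2} as a quick consequence of Proposition \ref{integral_bounds}, together with the universal exponential growth of the continued fraction denominators $q_n$.

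First I would observe that on the interval $J^i_2$ the sets $V_l$ are pairwise disjoint (by Lemma \ref{disjoint}), so $h_i(x) = \sum_{l\in J^i_2}\chi_{V_l}(x) \in \{0,1\}$. Consequently $\int h_i\, d\lambda \leq 1$ for every $i$, and similarly for $\int h_j\, d\lambda$. Since the quantity we wish to bound is symmetric in $i$ and $j$, there is no loss of generality in assuming $j>i$, and we can apply Proposition \ref{integral_bounds} to obtain
\[
\left| \int h_i h_j\, d\lambda - \int h_i\, d\lambda \int h_j\, d\lambda \right| \leq \frac{3q_{i+1}}{q_j} \int h_i\, d\lambda \int h_j\, d\lambda \leq \frac{3 q_{i+1}}{q_j}.
\]
So the entire proof reduces to the purely number-theoretic claim that $q_{i+1}/q_j$ decays exponentially in $j-i$.

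The main (elementary) step is to exploit the recurrence $q_{n+1} = a_{n+1} q_n + q_{n-1} \geq q_n + q_{n-1}$. In particular $q_{n+2} \geq q_{n+1} + q_n \geq 2 q_n$, so the denominators at least double every two indices. Iterating from $n=i+1$ up to $n=j$ yields $q_j \geq 2^{\lfloor (j-i-1)/2\rfloor} q_{i+1}$, and hence
\[
\frac{q_{i+1}}{q_j} \leq 2 \cdot 2^{-(j-i)/2} = 2 e^{-(j-i)(\log 2)/2}.
\]
Setting $b = (\log 2)/2$ and $C = 6$ gives
\[
\left| \int h_i h_j\, d\lambda - \int h_i\, d\lambda \int h_j\, d\lambda \right| \leq C e^{-b(j-i)},
\]
and by the symmetry noted above the same bound holds with $|i-j|$ in place of $j-i$.

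I do not expect any serious obstacle here; Proposition \ref{integral_bounds} already does the heavy lifting (essentially establishing a multiplicative near-independence of the events defining $h_i$ and $h_j$), and what remains is simply to convert its $(3q_{i+1}/q_j)$ relative-error bound into an absolute one using $h_i, h_j \leq 1$ and then to exploit the exponential growth of the $q_n$. The only mild subtlety is to remember to handle the $j<i$ case by the symmetry of $h_i h_j$ (and of the product of integrals) rather than by re-proving Proposition \ref{integral_bounds} with the roles of $i$ and $j$ reversed.
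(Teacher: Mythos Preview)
Your proposal is correct and follows essentially the same route as the paper: reduce to Proposition~\ref{integral_bounds}, bound each $\int h_i\,d\lambda$ by $1$, and use $q_{k+2}\geq 2q_k$ to get exponential decay of $q_{i+1}/q_j$. The only difference is that you make the constants $C=6$ and $b=(\log 2)/2$ explicit, whereas the paper leaves them implicit.
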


\begin{proof}
We may assume $j>i$. Using Proposition \ref{integral_bounds}, we need to show that the expression
\[\frac{3q_{i+1}}{q_j}\int h_id\lambda \int h_j d\lambda\]
decays exponentially in $|i-j|$.  A clear upper bound on each of $\int h_i d\lambda, \int h_j d\lambda$ is 1. As $q_{k+2}>2q_k$, $\frac{q_{i+1}}{q_j}$ decays exponentially in $|i-j|$, as desired.
\end{proof}

We can apply this approximate independence to prove the remaining inequality in equation \ref{pointwise_bounds}.  Let $\tilde h_i(x) = h_i(x) - \int h_i(x)d\lambda$, and note that $\tilde h_i(x) \in (-1, 1).$  Let $\tilde s_n(x) = \sum_{i=1}^n \tilde h_i(x)$.

\begin{prop}
For almost every $x \in S^1$, for sufficiently large $n$,
\[\sum_{j=1}^{q_n-1}\chi_{V_j} (x) > \frac{1}{4}(n-2). \]
\end{prop}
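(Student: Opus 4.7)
The plan is to bound $\sum_{j=1}^{q_n-1}\chi_{V_j}(x)$ from below by $\sum_{i=1}^{n-2}h_i(x)$, and then control the deviation of this sum from its mean (which is at least $(n-2)/2$ by Lemma \ref{one_half}) using the exponential decay of correlations from Proposition \ref{decay2}.

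The first step is to verify that for every $i\leq n-2$, the interval $J_2^i = [q_{i-1}+q_i,\,q_{i-1}+2q_i)$ lies in $[1,q_n-1]$. This follows from $q_{i+2}=a_{i+2}q_{i+1}+q_i\geq q_{i+1}+q_i\geq 2q_i+q_{i-1}$, which places the right endpoint of $J_2^i$ at most at $q_{i+2}\leq q_n$. Since the $J_2^i$ are pairwise disjoint by construction and each $h_i$ only counts indices in $J_2^i$, summing yields
\[
\sum_{j=1}^{q_n-1}\chi_{V_j}(x) \;\geq\; \sum_{i=1}^{n-2} h_i(x) \;=\; \tilde s_{n-2}(x) + \sum_{i=1}^{n-2}\int h_i\,d\lambda \;\geq\; \tilde s_{n-2}(x) + \tfrac{1}{2}(n-2).
\]
It therefore suffices to prove $\tilde s_{n-2}(x) > -\tfrac{1}{4}(n-2)$ for almost every $x$ and all sufficiently large $n$.

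Next I would estimate $\int \tilde s_n^2\,d\lambda$. Expanding the square and using $|\tilde h_i|\leq 1$ for the diagonal, together with $\bigl|\int \tilde h_i\tilde h_j\,d\lambda\bigr|\leq Ce^{-b|i-j|}$ from Proposition \ref{decay2} for the off-diagonal, gives
\[
\int \tilde s_n^2\,d\lambda \;\leq\; n + 2Cn\sum_{k\geq 1}e^{-bk} \;\leq\; Mn
\]
for a constant $M$ independent of $n$.

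The main obstacle is that Chebyshev's inequality only yields $\lambda(\tilde s_n<-\epsilon n)\leq M/(\epsilon^2 n)$, which is not summable in $n$, so Borel--Cantelli cannot be applied directly. I would handle this with the standard subsequence trick: along $n_k:=k^2$ the tail probabilities are summable, so Borel--Cantelli gives $\tilde s_{n_k}(x)>-\epsilon n_k$ for all sufficiently large $k$, a.e. $x$. For intermediate $n\in[n_k,n_{k+1}]$ the crude bound $|\tilde s_n-\tilde s_{n_k}|\leq n_{k+1}-n_k = 2k+1$ is $o(n_k)$, so $\tilde s_n(x)>-2\epsilon n$ for all sufficiently large $n$. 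Choosing $\epsilon<1/8$ (e.g.\ $\epsilon=1/9$) applied to the index $n-2$ then gives $\tilde s_{n-2}(x)>-\tfrac{1}{4}(n-2)$, as required.
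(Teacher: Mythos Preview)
Your proposal is correct and follows essentially the same approach as the paper: bound the sum below by $\sum_{i=1}^{n-2}h_i$, control the variance of $\tilde s_n$ via the exponential correlation decay of Proposition~\ref{decay2}, apply Chebyshev along the subsequence $k^2$ so that Borel--Cantelli applies, and interpolate between squares using $|\tilde h_i|\leq 1$. The only cosmetic difference is that the paper actually deduces $\tilde s_{n-2}(x)/(n-2)\to 0$ a.e.\ rather than fixing a specific $\epsilon<1/8$, but the mechanics are identical.
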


\begin{proof}
First, for all $x\in [0,1)$, $\sum_{j=1}^{q_n-1}\chi_{V_j} (x) \geq \sum_{i=1}^{n-2} h_i(x)$ as $j\in J^i_2$ implies $j<q_{i+2}$.

Consider $\sum_{i=1}^{n-2} \int h_i(x)d\lambda$.  By Lemma \ref{one_half} this is bounded below by $\frac{1}{2}(n-2)$; it is bounded above by $n$ as $h_i$ takes only 1 or 0 as a value.  Applying Chebyshev's inequality to $\tilde s_n$ yields (for any $\epsilon >0$)
\begin{align*}
\lambda\left(\{x: |\tilde s_{n-2}(x) | > \epsilon (n-2)\}\right) & < \frac{\int \tilde s_{n-2}^2(x) d\lambda}{\epsilon^2 (n-2)^2} \\
								& = \frac{\sum_{i=1}^{n-2} \int \tilde h_i^2(x) d\lambda + 2\sum_{i<j} \int \tilde h_i(x) \tilde h_j(x) d\lambda }{\epsilon^2(n-2)^2}\\
								& < \frac{D}{\epsilon^2 (n-2)}.
\end{align*}
For the last inequality we have used the facts that $\tilde h_i(x) \in (-1,1)$ and therefore $\sum_{i=1}^{n-2} \int \tilde h_i^2(x)d\lambda < n-2$, and that for some positive constant $D$, $2\sum_{i<j} \int \tilde h_i \tilde h_j d\lambda <(D-1)(n-2)$ by Proposition \ref{decay2}.

We restrict our attention to the subsequence of times $\{(n-2)^2\}$, obtaining
\[\lambda(\{x: |\tilde s_{(n-2)^2}(x) | > \epsilon (n-2)^2\}) < \frac{D}{\epsilon^2 (n-2)^2}.\]
Summing the term on the right-hand side of the above inequality over all $n$ yields a convergent series so by the Borel-Cantelli Lemma, for almost every $x\in [0,1)$, 
\[\frac{\tilde s_{(n-2)^2}(x)}{(n-2)^2} \to 0  \quad \mbox{ as } n \to \infty.\]

Consider now the intervals $[(n-2)^2, (n-1)^2)$.  As $\tilde h_i(x) \in (-1, 1)$, for $k\in [(n-2)^2, (n-1)^2)$, 
\[ |\tilde s_{(n-2)^2}(x) - \tilde s_k(x)| < 2(n-2)+1\]
so
\[\frac{|\tilde s_k(x)|}{k} < \frac{|\tilde s_{(n-2)^2}(x)| + 2(n-2)+1}{k} \leq \frac{|\tilde s_{(n-2)^2}(x)| +2(n-2)+1}{(n-2)^2} \to 0\]
as $k\to \infty$. 

We have now that for almost all $x$,
\[\frac{\sum_{i=1}^{n-2} h_i(x) - \int h_i(x)d\lambda }{n-2} \to 0. \]
As $\sum_{i=1}^{n-2} \int h_i(x)d\lambda \in (\frac{1}{2}(n-2), (n-2))$, for sufficiently large $n$, $\sum_{i=1}^{n-2} h_i(x) >\frac{1}{4}(n-2)$ as desired.
\end{proof}

We now prove a similar series of inequalities for $\sum_{j=1}^{q_n} \lambda(V_j)$, namely:

\begin{equation}\label{setwise_bounds}
  C_1n(\log n)^3 > \sum_{i=1}^na_i(\alpha) > \sum_{j=1}^{q_n-1} \lambda(V_j) > \frac{1}{2}(n-2).  
\end{equation}

The left-most inequality is Lemma \ref{alpha_bound} and the next is Corollary \ref{cor:lambda bound}. It remains only to prove:

\begin{lem}For all $\alpha$, 
\[\sum_{j=1}^{q_n-1} \lambda(V_j) > \frac{1}{2}(n-2).\]
\end{lem}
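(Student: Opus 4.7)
The plan is to assemble the bound directly from the pieces already built: the disjointness of the intervals $\{J_2^i\}_{i\in\mathbb{N}}$, the location of $J_2^i$ in the integer interval $[1,q_n-1]$ for $i\leq n-2$, and the lower bound on $\int h_i\,d\lambda$ supplied by Lemma \ref{one_half}.

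First I would recall that by the definition $J_2^i=[q_{i-1}+q_i,q_{i-1}+2q_i)$ given just before Lemma \ref{disjoint}, the family $\{J_2^i\}_{i\in\mathbb{N}}$ consists of pairwise disjoint intervals of integers. Next I would check that for every $1\le i\le n-2$ the interval $J_2^i$ is contained in $\{1,2,\ldots,q_n-1\}$. The left endpoint $q_{i-1}+q_i\ge 1$ is immediate, and the right endpoint is bounded by using the recurrence $q_{k+1}=a_{k+1}q_k+q_{k-1}$ twice: since $a_k\ge 1$ for all $k$,
\[ q_n \;=\; a_nq_{n-1}+q_{n-2} \;\ge\; q_{n-1}+q_{n-2} \;\ge\; (q_{n-2}+q_{n-3})+q_{n-2} \;=\; 2q_{n-2}+q_{n-3}, \]
so $q_{i-1}+2q_i-1<q_n$ whenever $i\le n-2$.

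With these inclusions I can then interchange the sums. Because $h_i(x)=\sum_{j\in J_2^i}\chi_{V_j}(x)$,
\[ \int_{[0,1)} h_i(x)\,d\lambda \;=\; \sum_{j\in J_2^i} \lambda(V_j), \]
and the disjointness of the $J_2^i$ together with their containment in $\{1,\ldots,q_n-1\}$ gives
\[ \sum_{j=1}^{q_n-1} \lambda(V_j) \;\ge\; \sum_{i=1}^{n-2}\sum_{j\in J_2^i}\lambda(V_j) \;=\; \sum_{i=1}^{n-2}\int_{[0,1)} h_i\,d\lambda. \]
Applying Lemma \ref{one_half} to each summand finishes the proof: the right-hand side is strictly greater than $\tfrac{1}{2}(n-2)$.

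There is really no serious obstacle here; everything reduces to bookkeeping about the intervals $J_2^i$ and invoking results already proved. The only thing worth doing carefully is the check that $\bigcup_{i=1}^{n-2}J_2^i\subset\{1,\ldots,q_n-1\}$, but the two applications of the continued-fraction recurrence above handle that cleanly.
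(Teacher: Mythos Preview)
Your proof is correct and follows essentially the same route as the paper: you bound the full sum below by the contributions over the disjoint blocks $J_2^i$ for $i\le n-2$, rewrite each block contribution as $\int h_i\,d\lambda$, and then invoke Lemma~\ref{one_half}. The only addition is that you spell out the verification $J_2^i\subset\{1,\ldots,q_n-1\}$ via the recurrence, which the paper leaves implicit (it records the equivalent fact ``$j\in J_2^i$ implies $j<q_{i+2}$'' in the preceding proposition).
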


\begin{proof}
This follows easily from Lemma \ref{one_half} after noting that
\[\sum_{j=1}^{q_n-1} \lambda(V_j) > \sum_{i=1}^{n-2}\sum_{j\in J_2^i}\lambda(V_j) = \sum_{i=1}^{n-2}\int_{[0,1)} h_i(x)d\lambda.\]
\end{proof}

The inequalities collected above enable us to prove the main theorem:

\begin{proof}[Proof of Theorem A]
Consider the full measure set of $\alpha$ satisfying Lemma \ref{alpha_bound}. For a given $\alpha$, suppose $n\in [q_m, q_{m+1})$ is large enough for the bound in Lemma \ref{alpha_bound} to hold.  Then we have the following for almost every $x$:
\[ \frac{1}{4}(m-2)< \sum_{j=1}^{q_m-1} \chi_{V_j}(x) \leq \sum_{j=1}^{n} \chi_{V_j}(x) \leq \sum_{j=1}^{q_{m+1}-1} \chi_{V_j}(x) < C_1(m+1)(\log(m+1))^3,\]
and
\[ \frac{1}{2}(m-2)< \sum_{j=1}^{q_m-1} \lambda(V_j) \leq \sum_{j=1}^{n} \lambda(V_j) \leq \sum_{j=1}^{q_{m+1}-1} \lambda(V_j) < C_1(m+1)(\log(m+1))^3.\]

Taking logs and forming the relevant quotient, we see that the $\log(m-2)$ and $\log(m+1)$ terms dominate the $\log(constant)$ and $\log(\log(-))$ terms.  As $\frac{\log(m)}{\log(m+1)}$ and $\frac{\log(m+1)}{\log(m-2)} \to 1$, the result follows.
\end{proof}


\section*{Acknowledgments} We would like to thank unnamed referees for a number of helpful comments.


\bigskip



\begin{thebibliography}{BFMS02}

\bibitem[BFMS02]{Fogg}
Val\'erie Berth\'e, S\'ebastien Ferenczi, Christian Mauduit, and Anne Siegel,
  editors.
\newblock {\em Substitutions in Dynamics, Arithmetics, and Combinatorics},
  volume 1794 of {\em Lecture Notes in Mathematics}.
\newblock Springer, Berlin, 2002.

\bibitem[CC17]{chaika_constantine}
Jon Chaika and David Constantine.
\newblock Quantitative shrinking target properties for rotations and interval
  exchanges.
\newblock To appear in Israel Journal of Mathematics (Available at
  ArXiv:1201.0941), 2017.

\bibitem[Kes66]{kesten}
H.~Kesten.
\newblock On a conjecture of {E}rd{\H o}s and {S}z\"{u}z related to uniform
  distribution mod 1.
\newblock {\em Acta Arithmetica}, 12:193--212, 1966.

\bibitem[Khi97]{khinchin}
A.~Ya. Khinchin.
\newblock {\em Continued Fractions}.
\newblock Dover Books on Mathematics. Dover, 1997.

\bibitem[Lot02]{lothaire}
M.~Lothaire.
\newblock {\em Algebraic combinatorics on words}, volume~90 of {\em
  Encyclopedia of Mathematics and its Applications}.
\newblock Cambridge University Press, Cambridge, 2002.

\bibitem[MH40]{HM}
Marston Morse and Gustav~A. Hedlund.
\newblock Symbolic dynamics {II}. {S}turmian trajectories.
\newblock {\em American Journal of Mathematics}, 62:1--42, 1940.

\end{thebibliography}
\end{document}